\newtheorem{theorem}{Theorem}
\newtheorem{lemma}[theorem]{Lemma}
\newtheorem{proposition}[theorem]{Proposition}
\newtheorem*{theorem*}{Theorem}
\theoremstyle{definition}
\newtheorem{remark}{Remark}
\newcommand{\Z}{\mathbb{Z}}
\newcommand{\F}{\mathbb{F}}
\newcommand{\Q}{\mathbb{Q}}
\newcommand{\Gal}{\textrm{Gal}}
\newcommand{\Aut}{\textrm{Aut}}
\newcommand{\oo}{\mathcal{O}}
\newcommand{\Cl}{\mathfrak{L}}
\newcommand{\ssm}{\smallsetminus}
\newcommand{\GL}{\textrm{GL}}
\newcommand{\SL}{\textrm{SL}}
\newcommand{\tors}{\textrm{tors}}
\newcommand{\Stab}{\textrm{Stab}}
\newcommand{\etors}{\emph{tors}}
\newcommand{\cD}{\mathcal{D}}
\newcommand{\oQ}{\overline{\Q}}
\newcommand{\oQl}{\overline{\Q_\ell}}
\newcommand{\odd}{\textrm{odd}}
\newcommand{\lcm}{\textrm{lcm}}
\date{}
\subjclass[2010]{11G05, 11G15}
\author{Tyler Genao}
\thanks{Email: \texttt{tylergenao@uga.edu}. 
This material is based upon work supported by the National Science Foundation Graduate Research Fellowship under Grant No. 1842396. Partial support was also provided by the Research and Training Group grant DMS-1344994 funded by the National Science Foundation.}
\title[Growth of Torsion Groups Upon Base Change From Number Fields]{Growth of Torsion Groups of Elliptic Curves Upon Base Change From Number Fields}
\begin{document}
\begin{abstract}
Given a number field $F_0$ that contains no Hilbert class field of any imaginary quadratic field, we show that under GRH there exists an effectively computable constant $B:=B(F_0)\in\Z^+$ for which the following holds: for any finite extension $L/F_0$ whose degree $[L:F_0]$ is coprime to $B$, one has for all elliptic curves $E_{/F_0}$ that the $L$-rational torsion subgroup  $E(L)[\tors]=E(F_0)[\tors]$. This generalizes a previous result of Gonz\'{a}lez-Jim\'{e}nez and Najman \cite[Theorem 7.2.i]{GJN20} over $F_0=\Q$. 

Towards showing this, we also prove a result on relative uniform divisibility of the index of a mod-$\ell$ Galois representation of an elliptic curve over $F_0$. Additionally, we show that the main result's conclusion fails when we allow $F_0$ to have rationally defined CM, due to the existence of $F_0$-rational isogenies of arbitrarily large prime degrees satisfying certain congruency conditions.
\end{abstract}
\maketitle
\section{Introduction}
Let $E_{/F_0}$ be an elliptic curve defined over a number field $F_0$. By the Mordell-Weil Theorem, the set $E(F_0)$ of $F_0$-rational points on $E$ is a finitely generated abelian group; consequently, the $F_0$-rational torsion subgroup $E(F_0)[\tors]$ is finite. For any extension $L/F_0$, one has the inclusion of subgroups $E(F_0)\subseteq E(L)$. A natural question to ask is how much the torsion subgroup $E(F_0)[\tors]$ grows once base-changed to $L$.

When the base field is $F_0:=\mathbb{Q}$, there are several results which establish constraints on torsion growth uniformly in the degree $[L:\mathbb{Q}]$. For example, when $[L:\Q]<\infty$ Lozano-Robledo gave a linear bound on the prime divisors of $\#E(L)[\tors]$ in terms of $[L:\Q]$ \cite[Theorem 1.3]{LR13}. He did this in part by determining lower bounds on the degrees of fields of definition of torsion points on $E$ of prime order $\ell$, via analyzing the image of the mod-$\ell$ Galois representation of $E$ as a subgroup of $\GL_2(\Z/\ell\Z)$ \`a la Serre \cite{Ser72}.

Continuing this idea of studying Galois representations of elliptic curves to better understand their torsion point degrees, Gonz\'{a}lez-Jim\'{e}nez and Najman \cite{GJN20} have determined restrictions on the degrees of number fields over which the torsion subgroup can grow for rational elliptic curves under base change. One of their results is as follows.
\begin{theorem*}\cite[Theorem 7.2.i]{GJN20}\label{baseChangeOverQ}
For all finite extensions $L/\Q$ whose degree is coprime to $2\cdot 3\cdot 5\cdot 7$, one has for all elliptic curves $E$ defined over $\Q$ that
\[
E(L)[\etors]=E(\Q)[\etors].
\]
\end{theorem*}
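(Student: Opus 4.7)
My plan is to prove the contrapositive: if $E(L)[\etors] \supsetneq E(\Q)[\etors]$ for some $E/\Q$, then $\gcd([L:\Q],\,210) > 1$. The key observation is that any new torsion point $P \in E(L)[\etors] \setminus E(\Q)[\etors]$ has $1 < [\Q(P):\Q] \mid [L:\Q]$; the goal is to force some prime in $\{2,3,5,7\}$ to divide $[\Q(P):\Q]$. First I reduce to prime-order torsion: if the growth occurs at a prime $\ell > 7$, then by Mazur's torsion theorem $E(\Q)[\ell^\infty]=0$, so we may take $P$ of prime order $\ell$ directly. If instead the growth occurs at $\ell \in \{2,3,5,7\}$, I choose $P$ of minimal order $\ell^n$ in $E(L)[\ell^\infty] \setminus E(\Q)[\ell^\infty]$; then $[\Q(P):\Q]$ divides $|\GL_2(\Z/\ell^n\Z)| = \ell^{4n-3}(\ell-1)^2(\ell+1)$, whose prime factors lie in $\{2,3,5,7\}$, so coprimality with $210$ forces $[\Q(P):\Q] = 1$, a contradiction.

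For the remaining case $\ell > 7$, I split on Mazur's isogeny theorem. If $\ell \notin \{11,13,17,19,37,43,67,163\}$, then $E$ admits no $\Q$-rational cyclic subgroup of order $\ell$, so $\overline{\rho}_\ell(\Gal(\oQ/\Q))$ has no stable line in $E[\ell]$; by the Dickson--Serre classification of irreducible subgroups of $\GL_2(\F_\ell)$ (split/non-split Cartan normalizers, preimages of $A_4$, $S_4$, $A_5$, or groups containing $\SL_2$), a direct enumeration shows every orbit on $E[\ell] \setminus \{0\}$ has size divisible by a prime in $\{2,3,5,7\}$. If instead $\ell \in \{11,13,17,19,37,43,67,163\}$ and $P$ lies in a stable cyclic subgroup $C \leq E[\ell]$, then $[\Q(P):\Q]$ equals the order of the isogeny character $\chi : \Gal(\oQ/\Q) \to \F_\ell^*$, which divides $\ell - 1$. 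For $\ell \in \{11,13,17,19,37,43,163\}$, the factorizations of $\ell-1$ (namely $10, 12, 16, 18, 36, 42, 162$) show every nontrivial divisor shares a factor with $210$. If $P \notin C$, the Borel structure of $\overline{\rho}_\ell$ implies $\Q(P)$ contains the fixed field of the quotient character on $E[\ell]/C$, reducing to the same character analysis.

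The main obstacle is $\ell = 67$: here $\ell - 1 = 66 = 2\cdot 3 \cdot 11$, so the isogeny character $\chi$ could \emph{a priori} have order $11$, which is coprime to $210$. To rule this out, I use the classification that every $E/\Q$ with a $67$-isogeny has CM by $\oo_K = \Z\left[\tfrac{1+\sqrt{-67}}{2}\right]$, where $K = \Q(\sqrt{-67})$. Since $67 \equiv 3 \pmod 4$, $K \subseteq \Q(\mu_{67})$, hence the mod-$67$ cyclotomic character restricted to $\Gal(\oQ/K)$ has order $33$. In the ramified CM setting (where $67 \oo_K = \mathfrak{l}^2$), $\overline{\rho}_{67}|_{\Gal(\oQ/K)}$ takes the form $\begin{pmatrix} \overline{\psi} & * \\ 0 & \overline{\psi} \end{pmatrix}$ for a reduction $\overline{\psi}$ of the Hecke character of $K$, so $\overline{\psi}^{\,2}$ equals $\chi_{67}|_{\Gal(\oQ/K)}$ and has order $33$. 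Hence $\overline{\psi}$ has order $33$ or $66$, both divisible by $3$; therefore $[\Q(P):\Q]$ is divisible by $3$, delivering the desired contradiction.
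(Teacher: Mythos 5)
The paper does not reprove this statement: it is cited as \cite[Theorem 7.2.i]{GJN20}, and the paper's own contribution is the number-field generalization via Propositions~\ref{TorsionDoesntChange} and~\ref{AFFinite}, with the $\Q$-admissibility of $\{2,3,5,7\}$ taken as a black box from \cite[Theorem 5.8]{GJN20}. Your sketch is attempting a direct proof from first principles, which is a reasonable thing to do, and the overall architecture is right: reduce to prime-order torsion; for $\ell\in\{2,3,5,7\}$ use divisibility by $\#\GL_2(\Z/\ell^n\Z)$; for $\ell>7$ split on Mazur's isogeny list and analyze the isogeny character in the reducible case and orbits under the mod-$\ell$ image in the irreducible case. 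Your treatment of the Borel case is essentially correct, and you correctly isolate $\ell=67$ as the delicate discriminant and resolve it via the ramified-CM shape of $\overline{\rho}_{67}|_{G_K}$. (One small omission: when $P\notin C$ you pass to the quotient character $\chi'$, but you should observe that $\chi'=1$ cannot occur for $\ell>7$ since it would give a $\Q$-rational $\ell$-torsion point on $E/C$, contradicting Mazur's torsion theorem.)

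The genuine gap is the sentence asserting that in the irreducible case ``a direct enumeration shows every orbit on $E[\ell]\setminus\{0\}$ has size divisible by a prime in $\{2,3,5,7\}$.'' This is not true at the level of generality of Dickson's classification: a subgroup of $N_s(\ell)$ that is irreducible and has full determinant can still have off-axis orbits whose size is coprime to $210$. Concretely, take $\ell=23$, $g$ a generator of $\F_{23}^\times$, $H=\langle \mathrm{diag}(g^2,g^{12}),\,\mathrm{diag}(g^{12},g^2)\rangle\subseteq C_s(23)$ of order $121$, and $G=H\cup\begin{bmatrix}0&1/t\\t&0\end{bmatrix}H$ for any $t\in\F_{23}^\times$. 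Then $G$ is irreducible, $\det(G)=\F_{23}^\times$, and the orbit of $e_1+te_2$ has size $121=11^2$, which is coprime to $210$. So your ``direct enumeration'' cannot run over the four Dickson buckets alone; you must invoke the much more restrictive classification of which subgroups actually occur as $\rho_{E,\ell}(G_\Q)$ for $\ell>7$, namely Serre's bound on exceptional images, the Bilu--Parent--Rebolledo resolution of the split-Cartan case, and the Zywina/Le Fourn--Lemos constraints in the non-split Cartan case, together with the known shapes of CM images. Equivalently, one can use the image-of-inertia argument (as the paper does in Theorem~\ref{TheoremSecondMain}) to force a large subgroup such as $\cD^e$ or $C_{ns}(\ell)^e$ into $G$, which bounds $[N(\ell):G]$ and in turn forces evenness of orbit lengths; but as written, your sketch has no mechanism to exclude small irreducible subgroups of $N_s(\ell)$ like the one above.
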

Our present paper proves a number field analogue of \cite[Theorem 7.2.i]{GJN20}, wherein our base field can be larger than $\Q$. 
The proof of our theorem is split into Propositions \ref{TorsionDoesntChange} and \ref{AFFinite}.
By GRH we mean the \textit{Generalized Riemann Hypothesis.} Also, given a number field $F_0$, we say that $F_0$ has \textit{rationally defined CM} if there  exists an elliptic curve $E_{/F_0}$ with complex multiplication (CM) whose endomorphism ring is $F_0$-rational.
\begin{theorem}\label{THEOREMBASECHANGEOVERF}
Assume that GRH is true, and let $F_0$ be a number field with no rationally defined CM.
Then there exists an effectively computable constant $B:=B(F_0)\in\Z^+$ for which the following holds: for any finite extension $L/F_0$ whose degree $[L:F_0]$ is coprime to $B$, one has for all elliptic curves $E$ defined over $F_0$ that 
\[
E(L)[\etors]=E(F_0)[\etors].
\]
\end{theorem}
As we will see, the proof of Theorem \ref{THEOREMBASECHANGEOVERF} reduces to determining a finite set $A_{F_0}$ of prime numbers such that for any elliptic curve $E_{/F_0}$ and for any prime order torsion point $R\in E$, the degree $[F_0(R):F_0]$ of the field of definition of $R$ equals $1$ or is divisible by some prime in $A_{F_0}$; in fact, we will show that $[F_0(R):F_0]$ is always even when $\ell$ is sufficiently large.

For our analysis of torsion point degrees, we will prove Theorem \ref{THEOREMSECONDMAIN}, a result on relative uniform bounds for mod-$\ell$ Galois representations of elliptic curves over number fields. The proof will utilize Serre's work on the image of inertia under mod-$\ell$ Galois representations \cite{Ser72} (see also Theorems \ref{DicksonSerreClassification} and \ref{TheoremImageOfInertia}).

Before we state Theorem \ref{THEOREMSECONDMAIN}, let us introduce  several subgroups of $\GL_2(\Z/\ell\Z)$; they are defined in Section \ref{SectionOrbitsGalReps}. Fix an algebraic closure $\oQ$ of $\Q$, and let $G_{F_0}:=\Gal(\oQ/F_0)$ denote the absolute Galois group of $F_0$.
Given an elliptic curve $E_{/F_0}$, for each integer $N\in\Z^+$ let us denote the mod-$N$ Galois representation of $E$ by
\[
\rho_{E,N}\colon G_{F_0}\rightarrow \Aut(E[N]).
\]
When working with a fixed basis $\lbrace P,Q\rbrace$ of $E[N]$, we have an explicit representation $\rho_{E,N, P,Q}\colon G_{F_0}\rightarrow \GL_2(\Z/N\Z)$. By abuse of notation, we will let $G$ denote both images $\rho_{E,N}(G_{F_0})$ and $\rho_{E,N,P,Q}(G_{F_0})$, specifying the basis only when necessary. When $N=\ell$ is prime, we will use $C_s(\ell)$ and $C_{ns}(\ell)$ to denote the split and non-split Cartan subgroups of $\GL_2(\Z/\ell\Z)$, respectively, and $N_s(\ell)$ and $N_{ns}(\ell)$ their respective normalizers.  
We will let $\cD$ denote the semi-Cartan subgroup of $\GL_2(\Z/\ell\Z)$, and $G(\ell)$ the unique subgroup of $N_{ns}(\ell)$ with $G(\ell)\not\subseteq C_{ns}(\ell)$ and $[C_{ns}(\ell):G(\ell)\cap C_{ns}(\ell)]=3$. In the following, we will let $S_{F_0}$ denote the finite set of primes from \cite[Theorem 1]{LV14}, see Remark \ref{RemarkLVHypothesis}.
\begin{theorem}\label{THEOREMSECONDMAIN}
Assume that GRH is true, and let $F_0$ be a number field with no rationally defined CM. Then for all primes $\ell\gg_{F_0}0$, one has that for any elliptic curve $E_{/F_0}$, its mod-$\ell$ representation $G:=\rho_{E,\ell}(G_{F_0})$ is either surjective, or is contained in $N_s(\ell)$ or $N_{ns}(\ell)$ up to conjugacy. 
\begin{enumerate}[1.]
\item If $G\subseteq N_s(\ell)$ then $\cD^{e}\subseteq G$ for some $e\in \lbrace 1,2,3,4,6\rbrace$, and if $\ell\neq 37,73$ then the center $Z(\ell)\subseteq G$, and in fact
\[
[N_s(\ell):G]\mid \gcd(\ell-1,e).
\]
\item If $G\subseteq N_{ns}(\ell)$ then $C_{ns}(\ell)^{e}\subseteq G$ for some $e\in \lbrace 1,2,3,4,6\rbrace$, and in fact
\[
[N_{ns}(\ell):G]\mid 6.
\]
\begin{enumerate}[a.]
\item If $\ell\equiv 1\pmod 3$, then $G$ equals $N_{ns}(\ell)$ or $C_{ns}(\ell)$, with $G=N_{ns}(\ell)$ if $F_0$ has a real embedding.
\item If $\ell\equiv 2\pmod 3$,
then $G$ equals $N_{ns}(\ell)$, $C_{ns}(\ell)$, $G(\ell)$ or $C_{ns}(\ell)^3$, with $G=N_{ns}(\ell)$ or $G(\ell)$ if $F_0$ has a real embedding.
\end{enumerate}
\end{enumerate}
\end{theorem}
\begin{remark}\label{RemarkLVHypothesis}
As we will see in our proof of Theorem \ref{THEOREMSECONDMAIN}, by $\ell\gg_{F_0}0$ we can take $\ell\geq \max\lbrace 29, 15[F_0:\Q]+2\rbrace$, $\ell$ unramified in $F_0$ and $\ell\not\in S_{F_0}$, where $S_{F_0}$ is from \cite[Theorem 1]{LV14}.

Let us briefly explain $S_{F_0}$ and the two assumptions from Theorems \ref{THEOREMBASECHANGEOVERF} and \ref{THEOREMSECONDMAIN}: that GRH is true and $F_0$ has no rationally defined CM.
Under these assumptions, Larson and Vaintrob have shown there exists a finite set of primes $S_{F_0}$ such that the prime degree of any $F_0$-rational isogeny of an elliptic curve lies in $ S_{F_0}$ \cite[Theorem 1]{LV14}. They also give an effectively computable upper bound on the primes from $S_{F_0}$ \cite[Theorem 7.9]{LV14}. 
\end{remark}
\begin{remark}
One has additional information about the mod-$\ell$ Galois representation of a non-CM elliptic curve $E$ defined over $\Q$: for any prime $\ell>1.4\times 10^7$, the mod-$\ell$ representation $\rho_{E,\ell}(G_\Q)$ equals either $\GL_2(\Z/\ell\Z)$ or $N_{ns}(\ell)$ up to conjugacy  \cite[Theorem 1.2]{LFL21}.
\end{remark}
It is natural to ask whether the ``rationally defined CM" hypothesis on $F_0$ is necessary. Indeed it is, as is shown by the following theorem  -- we will prove this in Section \ref{SectionCMCase}.
\begin{theorem}\label{TheoremBaseChangeFailsForRationallyDefinedCM}
Let $F_0$ be a number field with rationally defined CM. Then for all integers $B\in\Z^+$ there exists a finite extension $L/ F_0$ whose degree $[L:F_0]$ is coprime to $B$, and a CM elliptic curve $E_{/F_0}$ for which 
\[
E(L)[\emph{tors}]\neq E(F_0)[\emph{tors}].
\]
\end{theorem}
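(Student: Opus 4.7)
The plan is to use the rationally defined CM structure to construct, for each $B\in\Z^+$, an $F_0$-rational cyclic isogeny of large prime degree $\ell$ whose kernel is defined over a field extension $L/F_0$ of degree $>1$ coprime to $B$. Write $K\subseteq F_0$ for the imaginary quadratic field furnishing the CM, fix a CM elliptic curve $E_0/F_0$ with $\textrm{End}(E_0)=\oo_K$, and set $w_K:=|\oo_K^\times|\in\{2,4,6\}$. For any rational prime $\ell$ split in $K$ as $\ell\oo_K=\mathfrak{l}\bar{\mathfrak{l}}$, the subgroup $E_0[\mathfrak{l}]$ is $F_0$-rational of order $\ell$, and $G_{F_0}$ acts on it through the character $\chi_0\colon G_{F_0}\to(\oo_K/\mathfrak{l})^\times\cong\F_\ell^\times$ arising as the mod-$\mathfrak{l}$ reduction of the Hecke character attached to $E_0$; any nonzero $P\in E_0[\mathfrak{l}]$ then satisfies $[F_0(P):F_0]=|\chi_0(G_{F_0})|$.

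To force this degree to be coprime to $B$, we will cancel the $\mu_{w_K}$-part of $\chi_0$ by an appropriate twist. Since $\mu_{w_K}\subseteq K\subseteq F_0$, Kummer theory gives a bijection $F_0^\times/(F_0^\times)^{w_K}\cong\textrm{Hom}(G_{F_0},\mu_{w_K})$, and for each $d\in F_0^\times$ the $w_K$-th order twist $E_d/F_0$ of $E_0$ is a CM elliptic curve with $\textrm{End}(E_d)=\oo_K$ whose mod-$\mathfrak{l}$ character is $\chi_d=\chi_0\cdot\eta_d$, where $\eta_d\colon G_{F_0}\to\mu_{w_K}$ denotes the Kummer character of $\sqrt[w_K]{d}$. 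Provided $w_K\mid\ell-1$, we compose $\chi_0$ with the quotient $\F_\ell^\times\twoheadrightarrow\F_\ell^\times/(\F_\ell^\times)^{w_K}\cong\mu_{w_K}$ to obtain a character $\bar\chi_0\colon G_{F_0}\to\mu_{w_K}$, and by Kummer we pick $d\in F_0^\times$ with $\eta_d=\bar\chi_0^{-1}$. For this $d$, the image of $\chi_d$ lies entirely in $(\F_\ell^\times)^{w_K}$, so $|\chi_d(G_{F_0})|$ divides $(\ell-1)/w_K$.

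Finally we select $\ell$ via Dirichlet's theorem on primes in arithmetic progressions so as to satisfy (i) $\ell$ splits in $K$, which automatically forces $w_K\mid\ell-1$; (ii) $v_p(\ell-1)=v_p(w_K)$ for every prime $p\mid B$, so that $(\ell-1)/w_K$ is coprime to $B$; and (iii) $\ell$ exceeds Merel's uniform bound on prime orders of $F_0$-rational torsion of elliptic curves over $F_0$, so that $\chi_d$ must be nontrivial. Conditions (i) and (ii) amount to finitely many compatible congruence conditions modulo an explicit integer depending on $\textrm{disc}(K)$, $w_K$, and $B$, while (iii) removes only finitely many primes, so infinitely many primes $\ell$ qualify. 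For any such $\ell$ and the corresponding twist $E:=E_d$, the field $L:=F_0(E[\mathfrak{l}])$ satisfies $[L:F_0]>1$ and coprime to $B$, and any nonzero $P\in E[\mathfrak{l}]$ witnesses $E(L)[\etors]\neq E(F_0)[\etors]$. The main technical step that must be verified is the twist identity $\chi_d=\chi_0\cdot\eta_d$ at the level of Galois characters; this follows by a descent-style computation using the isomorphism $E_0\cong E_d$ defined over $F_0(\sqrt[w_K]{d})$.
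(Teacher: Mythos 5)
Your proposal reaches the same conclusion by a genuinely different route. The paper first reduces to the case $F_0 = K_\oo$ a ring class field, invokes \cite[Cor.~1.8]{BC20} to produce an $\oo$-CM curve with $\rho_{E,\ell}(G_{K_\oo}) = C_s(\ell)$ (so the isogeny character $r$ is surjective), twists by $\chi := r^{(\ell-1)/w}$, and finally descends to general $F_0$ by observing that $K_\oo(P)/K_\oo$ is Galois. You instead work directly over $F_0$ with an $\oo_K$-CM curve $E_0$, take $\chi_0$ to be the (not necessarily surjective) character of $G_{F_0}$ on $E_0[\mathfrak{l}]$, and use Kummer theory $F_0^\times/(F_0^\times)^{w_K} \cong \mathrm{Hom}(G_{F_0}, \mu_{w_K})$ to locate a twist that removes the ``$\mu_{w_K}$-part'' of $\chi_0$. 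This is an attractive simplification that avoids the reference to \cite{BC20} and the two-step reduction. However, there are two genuine gaps.

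First, the cancellation step needs a section $\iota \colon \F_\ell^\times/(\F_\ell^\times)^{w_K} \to \mu_{w_K} \subset \F_\ell^\times$ of the quotient map $\pi$, i.e., with $\pi \circ \iota = \mathrm{id}$, so that the image of $\chi_d = \chi_0\cdot(\iota\circ\pi\circ\chi_0)^{-1}$ actually lies in $\ker\pi = (\F_\ell^\times)^{w_K}$. Writing $g$ for a generator of $\F_\ell^\times$ and identifying both $\mu_{w_K}$ and $\F_\ell^\times/(\F_\ell^\times)^{w_K}$ with $\Z/w_K\Z$, the restriction $\pi|_{\mu_{w_K}}$ is multiplication by $(\ell-1)/w_K \bmod w_K$, so $\iota$ exists iff $\gcd\bigl((\ell-1)/w_K,\, w_K\bigr)=1$. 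Your condition~(ii) controls $v_p(\ell-1)$ only for $p \mid B$, so this can fail: take $K=\Q(i)$, $w_K = 4$, $B=3$, $\ell=17$. Then (i) and (ii) hold, but $\mu_4 = (\F_{17}^\times)^4$ and $\pi|_{\mu_4}=0$, so $\bar\chi_0$ already takes values in $(\F_{17}^\times)^4$ and twisting by $\bar\chi_0^{-1}$ does nothing modulo $(\F_{17}^\times)^4$. The fix is harmless --- impose (ii) for all $p \mid B w_K$, equivalently replace $B$ by $\mathrm{lcm}(B, w_K)$ --- but it must be stated.

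Second, the simultaneous satisfiability of the splitting condition~(i) and the congruence conditions~(ii) is asserted, not proved. The splitting condition is itself a congruence condition modulo $|\Delta_K|$, whose prime factors can coincide with those of $w_K$ (indeed $\Delta_K\in\{-3,-4\}$ are precisely the cases $w_K>2$), so compatibility is not a formal consequence of CRT. This is exactly the content of Lemma~\ref{SplittingOandCoprimality}, whose proof in the paper is a case analysis via quadratic reciprocity (with separate handling of $\Delta_K=-3,-4$ and of the sign of $e$ and the presence of prime factors $\equiv 1$ or $3 \pmod 4$ in $d$). Your proof should either supply this casework or give a Chebotarev/linear-disjointness argument for why the relevant conjugacy class is nonempty.
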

\section{Towards the Proof of Theorem \ref{THEOREMBASECHANGEOVERF}}
\subsection{Definition of $A_{F_0}$ and $B_{F_0}$}
To prove Theorem \ref{THEOREMBASECHANGEOVERF}, we  need to keep track of prime divisors of torsion point degrees of elliptic curves over $F_0$.
Once and for all, fix an algebraic closure $\oQ$; let $F_0/\Q$ be an algebraic extension.  
Let us call a subset $A_{F_0}\subseteq \Z^+$ of prime numbers \textit{$F_0$-admissible} if for all elliptic curves $E_{/F_0}$ and all prime order torsion points $R\in E(\oQ)$, one has that either $[F_0(R):F_0]=1$ or there exists $p\in A_{F_0}$ so that $p\mid [F_0(R):F_0]$. For example, \cite[Theorem 5.8]{GJN20} shows that $A_{\Q}:=\lbrace 2,3,5,7\rbrace$ is $\Q$-admissible. As we will see in the proof of Proposition \ref{TorsionDoesntChange}, if $A_{F_0}$ is any $F_0$-admissible set and $[L:F_0]$ is coprime to all primes $p\in A_{F_0}$, then for all elliptic curves $E_{/F_0}$ one has $E(L)[\ell]=E(F_0)[\ell]$ for all primes $\ell\in\Z^+$.

Next, let us define $R_{F_0}$ as the set of prime divisors of $F_0$-rational torsion subgroups of elliptic curves,
\[
R_{F_0}:=\lbrace p\in \Z^+: \exists E_{/F_0}~\textrm{with}~p\mid \#E(F_0)[\tors]\rbrace.
\]
Equivalently, $R_{F_0}$ is the set of all primes that are the order of some $F_0$-rational torsion point on an elliptic curve.
If $F_0$ is a number field, then one has that $\#R_{F_0}$ is finite and bounded uniformly in the degree $[F_0:\Q]$ -- this is a consequence of Merel's strong uniform boundedness theorem \cite[Corollaire]{Mer96}.

Following $R_{F_0}$, we define the set $B_{F_0}$ as follows,
\[
B_{F_0}:=\bigcup_{p\in R_{F_0}}\lbrace p\rbrace \cup \lbrace \textrm{prime divisors of }p-1\rbrace.
\]
Obviously $R_{F_0}\subseteq B_{F_0}$, and if $\#R_{F_0}<\infty$ then $\# B_{F_0}<\infty$. 
When $F_0=\Q$, Mazur's torsion theorem \cite{Maz76} shows that $R_\Q=B_\Q=\lbrace 2,3,5,7\rbrace$.
As we will see in the proof of Proposition \ref{TorsionDoesntChange}, $B_{F_0}$ controls which primary torsion point orders can appear upon finite degree base change for any elliptic curve $E_{/F_0}$. 

With $F_0$-admissible sets and $B_{F_0}$ defined, we are able to prove the first key step in the proof of Theorem \ref{THEOREMBASECHANGEOVERF}.
\begin{proposition}\label{TorsionDoesntChange}
Let $F_0/\Q$ be an algebraic extension and $A_{F_0}$ an $F_0$-admissible set. Let $L/F_0$ be any finite extension for which all primes $p\in A_{F_0}\cup B_{F_0}$ do not divide $[L:F_0]$. Then for all elliptic curves $E_{/F_0}$ one has that
\[
E(L)[\emph{tors}]=E(F_0)[\emph{tors}].
\]
\end{proposition}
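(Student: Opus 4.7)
The plan is to reduce the statement to showing, for every prime $\ell$ and every $a \geq 1$, that $E(L)[\ell^a] = E(F_0)[\ell^a]$, and then to prove this by induction on $a$. Once this equality of $\ell$-primary parts is established for all $\ell$ and $a$, taking unions yields the desired equality of full torsion subgroups.

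For the base case $a = 1$, I would appeal directly to the admissibility hypothesis. Any point $P \in E(L)$ of prime order $\ell$ has field of definition $F_0(P) \subseteq L$, so $[F_0(P):F_0]$ divides $[L:F_0]$. By $F_0$-admissibility of $A_{F_0}$, either $[F_0(P):F_0] = 1$, giving $P \in E(F_0)$, or some prime $p \in A_{F_0}$ divides $[F_0(P):F_0]$ and hence $[L:F_0]$, contradicting coprimality with $A_{F_0}$.

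For the inductive step with $a \geq 2$, given $P \in E(L)[\ell^a]$ I would first apply the inductive hypothesis to $\ell P \in E(L)[\ell^{a-1}]$ to conclude $\ell P \in E(F_0)$. If $P$ has order strictly less than $\ell^a$, it already lies in $E(L)[\ell^{a-1}] = E(F_0)[\ell^{a-1}]$ and we are done; otherwise $\ell^{a-1} P$ is a nonzero $F_0$-rational point of order $\ell$, which forces $\ell \in R_{F_0} \subseteq B_{F_0}$ and hence $\gcd(\ell, [L:F_0]) = 1$ by hypothesis.

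The main idea of the proof is then an orbit-averaging argument. Setting $k := [F_0(P):F_0]$, which divides $[L:F_0]$ and is therefore coprime to $\ell$, let $\sigma_1(P), \ldots, \sigma_k(P)$ enumerate the $G_{F_0}$-orbit of $P$ and set $S := \sum_i \sigma_i(P)$. As a $G_{F_0}$-invariant element, $S$ lies in $E(F_0) \subseteq E(L)$. Since $\ell P \in E(F_0)$, each difference $\sigma_i(P) - P$ lies in $E[\ell]$, so $S - kP \in E[\ell] \cap E(L) = E(L)[\ell]$, which equals $E(F_0)[\ell]$ by the base case. Hence $kP \in E(F_0)$, and multiplying by an integer inverse of $k$ modulo $\ell^a$ recovers $P \in E(F_0)$. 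One point worth noting is that this argument does not require $L/F_0$ to be Galois: the orbit sum $S$ is automatically in $E(F_0)$, bypassing the need to pass to the Galois closure of $L/F_0$, over which the coprimality hypothesis could fail.
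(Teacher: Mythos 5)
Your proof is correct, and it takes a genuinely different route from the paper's in its inductive step. The paper handles the inductive step by invoking Lemma~\ref{IntTorsionDegree} (a cited result from \cite{GJN20}): for a point $R$ of exact order $\ell^n$ with $\ell R$ already $F_0$-rational, $[F_0(R):F_0]$ divides $\ell^2(\ell-1)$, so any prime dividing $[F_0(R):F_0]$ divides $\ell$ or $\ell-1$, hence lies in $B_{F_0}$, contradicting the coprimality hypothesis unless $[F_0(R):F_0]=1$. You instead run a self-contained trace argument: sum $P$ over its $G_{F_0}$-orbit to produce $S\in E(F_0)$; observe $S-kP\in E[\ell]\cap E(L)=E(F_0)[\ell]$ because $\ell P$ is $F_0$-rational; conclude $kP\in E(F_0)$; and invert $k$ modulo $\ell^a$ using $\gcd(k,\ell)=1$. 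Your argument avoids citing Lemma~\ref{IntTorsionDegree} entirely, and in fact it uses strictly less of the hypothesis: you only need $[L:F_0]$ to be coprime to $A_{F_0}\cup R_{F_0}$ rather than to the larger set $A_{F_0}\cup B_{F_0}$, since you never need to exclude prime divisors of $\ell-1$ from dividing $[L:F_0]$. That said, this relaxation would not by itself improve Theorem~\ref{TheoremBaseChangeOverF}, because the constant $B$ in that theorem is ultimately determined by the (finite) $F_0$-admissible set $A_{F_0}$ together with $R_{F_0}$ or $B_{F_0}$, and your stronger conclusion here would only shrink the auxiliary set, not eliminate dependence on $A_{F_0}$.
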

Before we prove Proposition \ref{TorsionDoesntChange}, let us record an important lemma which controls the degrees of finite extensions over which one attains new prime power torsion.
\begin{lemma}\cite[Proposition 4.6]{GJN20}\label{IntTorsionDegree}
Let $F_0/\Q$ be an algebraic extension, $E_{/F_0}$ an elliptic curve and $R\in E_{/F_0}$ a torsion point of prime order $\ell^n>\ell$. Then $[F_0(R):F_0(\ell R)]$ divides $\ell^2$ or $\ell(\ell-1)$.
\end{lemma}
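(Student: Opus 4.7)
The plan is to translate $[F_0(R):F_0(\ell R)]$ into a statement about a $1$-cocycle attached to the mod-$\ell^n$ Galois representation and then carry out a short case analysis based on the mod-$\ell$ cyclotomic character. First, extend $R$ to a $\Z/\ell^n\Z$-basis $\{R,R'\}$ of $E[\ell^n]$ and let $\rho\colon G_{F_0}\to\GL_2(\Z/\ell^n\Z)$ be the resulting representation. Setting $H':=\rho(G_{F_0(\ell R)})$ and $H:=\rho(G_{F_0(R)})$, we have $[F_0(R):F_0(\ell R)]=[H':H]$; the condition $\sigma(\ell R)=\ell R$, combined with $n\geq 2$, forces every $M\in H'$ to have the form
\[
M=\begin{pmatrix}1+\ell^{n-1}\alpha(M) & b(M)\\ \ell^{n-1}\gamma(M) & d(M)\end{pmatrix},
\]
and $M\in H$ additionally requires $\alpha(M)=\gamma(M)=0$.

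Next, I would define $\phi\colon H'\to \F_\ell^2$ by $\phi(M):=(\alpha(M),\gamma(M))$. A short matrix multiplication (using $\ell^{2(n-1)}\equiv 0\pmod{\ell^n}$, valid since $n\geq 2$) shows that $\phi$ is a $1$-cocycle for the action of $H'$ on $\F_\ell^2$ through its mod-$\ell$ reduction $\bigl(\begin{smallmatrix}1&b\\0&d\end{smallmatrix}\bigr)$. Since $\ker\phi=H$ and the fibers of any $1$-cocycle are cosets of its kernel, one obtains $[H':H]=|\mathrm{Im}\,\phi|$. Moreover, the second coordinate $\gamma_*\colon H'\to\F_\ell$ is itself a $1$-cocycle for the character $d\colon H'\to\F_\ell^*$, which equals the mod-$\ell$ cyclotomic character (as $\det M\equiv d(M)\pmod\ell$).

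On the subgroup $\gamma_*^{-1}(0)\subseteq H'$ the correction term for $\alpha$ vanishes, making $\alpha|_{\gamma_*^{-1}(0)}\colon \gamma_*^{-1}(0)\to\F_\ell$ a group homomorphism with kernel $H$; its image is $\{0\}$ or $\F_\ell$, and one gets $[H':H]=|\mathrm{Im}\,\gamma_*|\cdot|\mathrm{Im}(\alpha|_{\gamma_*^{-1}(0)})|$ with the second factor equal to $1$ or $\ell$. If $d\equiv 1$ on $H'$ then $\gamma_*$ is itself a homomorphism with image $\{0\}$ or $\F_\ell$, so $[H':H]\mid \ell^2$. If $d$ is nontrivial but $\gamma_*|_{\ker d}\colon \ker d\to\F_\ell$ is surjective, then $|\mathrm{Im}\,\gamma_*|=\ell$ and again $[H':H]\mid \ell^2$.

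The only remaining subcase is that $d$ is nontrivial while $\gamma_*|_{\ker d}=0$: then $\gamma_*$ factors through a $1$-cocycle on $D:=d(H')\subseteq\F_\ell^*$ acting by inclusion. Since $|D|\mid \ell-1$ is coprime to $\ell$, one has $H^1(D,\F_\ell)=0$, so $\gamma_*(M)=(d(M)-1)v$ for some $v\in\F_\ell$, giving $|\mathrm{Im}\,\gamma_*|\in\{1,|D|\}$. Therefore $[H':H]$ divides $\ell\cdot|D|$, which divides $\ell(\ell-1)$, completing the proof. The main technical point I expect to need care is the explicit matrix computation verifying the cocycle identity, so that the correct action of $H'$ on $\F_\ell^2$ is identified; once that is set up, the remaining argument reduces to standard group-theoretic bookkeeping together with the vanishing of $H^1(D,\F_\ell)$.
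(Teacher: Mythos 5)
The paper does not prove this lemma; it is imported verbatim from \cite[Proposition~4.6]{GJN20}, so there is no internal argument to compare against. Your blind proof is correct, and its architecture is the natural one. I verified the two places you flagged as needing care. The matrix computation indeed gives, for $M_i$ with data $(\alpha_i,b_i,\gamma_i,d_i)$ and $n\ge 2$,
\[
\alpha(M_1M_2)=\alpha_1+\alpha_2+b_1\gamma_2,\qquad \gamma(M_1M_2)=\gamma_1+d_1\gamma_2,
\]
which is exactly the $1$-cocycle identity for the action of $H'$ on $\F_\ell^2$ through the mod-$\ell$ reduction $\bigl(\begin{smallmatrix}1&b\\0&d\end{smallmatrix}\bigr)$; and the fact that the fibers of a $1$-cocycle are left cosets of the subgroup $\phi^{-1}(0)$ holds by the standard argument (from $\phi(g^{-1}h)=g^{-1}(\phi(h)-\phi(g))$), giving $[H':H]=|\mathrm{Im}\,\phi|$. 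Your case split is exhaustive because $\gamma_*|_{\ker d}$ is a genuine homomorphism into $\F_\ell$ and so its image is $\{0\}$ or $\F_\ell$, and in the remaining case the descent of $\gamma_*$ to a $1$-cocycle on $D=d(H')$ together with $H^1(D,\F_\ell)=0$ (as $|D|\mid\ell-1$) gives $\gamma_*(M)=(d(M)-1)v$ and hence $|\mathrm{Im}\,\gamma_*|\in\{1,|D|\}$. Combined with $[\gamma_*^{-1}(0):H]\in\{1,\ell\}$ this yields $[H':H]\mid\ell^2$ or $[H':H]\mid\ell|D|\mid\ell(\ell-1)$. A small stylistic point: the identification of $d$ with the mod-$\ell$ cyclotomic character, while true, is never actually used -- only that $D\subseteq\F_\ell^\times$, hence $|D|\mid\ell-1$ and $H^1(D,\F_\ell)=0$, matters.

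One way to present the same argument more intrinsically, which is closer to how such statements are usually derived, is to replace your basis-dependent $\phi$ by the Kummer-type cocycle $\kappa\colon \Gal(\oQ/F_0(\ell R))\to E[\ell]$, $\sigma\mapsto \sigma(R)-R$ (well-defined since $\ell(\sigma(R)-R)=\sigma(\ell R)-\ell R=0$), whose fiber over $0$ is $G_{F_0(R)}$. The filtration $\langle \ell^{n-1}R\rangle\subseteq E[\ell]$ is $G_{F_0(\ell R)}$-stable with $G_{F_0(\ell R)}$ acting trivially on the subspace (as $\ell^{n-1}R=\ell^{n-2}(\ell R)$ is $F_0(\ell R)$-rational) and through $\chi_\ell$ on the quotient; projecting $\kappa$ to the quotient recovers your $\gamma_*$, and restricting to the subspace recovers your $\alpha$. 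This removes the need to extend $R$ to a basis of $E[\ell^n]$ but is otherwise the identical computation.
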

\begin{proof}[Proof of Proposition \ref{TorsionDoesntChange}]
It suffices to show that for all primes $\ell\in\Z^+$ one has
\[
E(L)[\ell^\infty]=E(F_0)[\ell^\infty],
\]
which is equivalent to showing that for each prime $\ell\in\Z^+$ and for all integers $n>0$ one has
\[
E(L)[\ell^n]=E(F_0)[\ell^n].
\]

We proceed via induction on $n$. The case $n=1$ follows from the assumption that $[L:F_0]$ is coprime to all primes $p\in A_{F_0}$. Suppose then that both $n>1$ and the result is true for $k<n$.
Let $R\in E(L)[\ell^{n}]$ be a point of exact order $\ell^{n}$. Then the inductive hypothesis implies that $\ell^{n-1}R$ is an $F_0$-rational point of order $\ell$, whence $\ell\in R_{F_0}$.
The inductive hypothesis also implies $F_0(\ell R)=F_0$, so by Lemma \ref{IntTorsionDegree} we have
\[
[F_0(R):F_0]\mid \ell^2(\ell-1).
\]
By this divisibility, any prime divisor $p\mid [F_0(R):F_0]$ divides $\ell(\ell-1)$, which from $\ell\in R_{F_0}$ implies $p\in B_{F_0}$ -- which would be impossible since $[F_0(R):F_0]\mid [L:F_0]$ and $[L:F_0]$ is coprime to all primes in $B_{F_0}$.
This forces $[F_0(R):F_0]=1$, which shows that $R\in E(F_0)[\ell^n]$. We
conclude by induction that $E(L)[\ell^\infty]=E(F_0)[\ell^\infty]$.  
\end{proof}

The existence of the constant $B:=B(F_0)\in \Z^+$ in Theorem \ref{THEOREMBASECHANGEOVERF} is equivalent to the existence of a finite $F_0$-admissible set $A_{F_0}$. When assuming GRH, the following proposition gives us a class of number fields for which this happens. 
\begin{proposition}\label{AFFinite}
Assume that GRH is true. Then for any number field $F_0$ which has no rationally defined CM, one can make a finite choice of $F_0$-admissible set $A_{F_0}$.
\end{proposition}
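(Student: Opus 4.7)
The plan is to exhibit an explicit finite admissible $A_{F_0}$ by showing that, for all sufficiently large $\ell$ and every elliptic curve $E_{/F_0}$ (CM or not), every non-singleton Galois orbit in $E[\ell]\setminus\lbrace O\rbrace$ has even size; then $A_{F_0}:=\lbrace 2\rbrace$ augmented with the prime divisors of orbit sizes arising from the finitely many remaining small primes will suffice. The key reduction is that for a prime-order $R\in E[\ell]\setminus\lbrace O\rbrace$, the degree $[F_0(R):F_0]$ is precisely the size of the $G$-orbit of $R$ under $G:=\rho_{E,\ell}(G_{F_0})\le\GL_2(\Z/\ell\Z)$, so everything reduces to a uniform orbit analysis across all possible images $G$.

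For the non-CM case, choose $\ell_0$ as the maximum of $29$, $15[F_0:\Q]+2$, every prime ramified in $F_0$, and every prime in $S_{F_0}$. For $\ell>\ell_0$, Theorem \ref{TheoremSecondMain} gives three possibilities for $G$ up to conjugacy: all of $\GL_2(\Z/\ell\Z)$, a subgroup of $N_{ns}(\ell)$ containing $C_{ns}(\ell)^e$ with $[N_{ns}(\ell):G]\mid 6$, or a subgroup of $N_s(\ell)$ containing $\cD^e$ with $[N_s(\ell):G]\mid \gcd(\ell-1,e)$, for some $e\in\lbrace 1,2,3,4,6\rbrace$. The surjective case yields orbit size $\ell^2-1$, which is even. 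Since $C_{ns}(\ell)\cong\F_{\ell^2}^\times$ acts freely transitively on non-zero points, in the non-split Cartan case every $C_{ns}(\ell)^e$-orbit has common size $(\ell^2-1)/\gcd(\ell^2-1,e)$, which is even for $\ell>\ell_0$, and any $G$-orbit is a union of such orbits. In the split Cartan case, partition the non-zero points into the two Cartan-stable axes and the off-axis complement: on-axis orbits have size at least $2(\ell-1)/\gcd(\ell-1,e)$ using the swap element in $N_s(\ell)\setminus C_s(\ell)$ (or a finer sub-Cartan analysis when $G\subseteq C_s(\ell)$), and off-axis orbits are strictly larger; in every subcase, the $2$-adic valuation of $\ell-1$ together with the bound $\gcd(\ell-1,e)\le 6$ preserves a factor of $2$.

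The CM case is not directly covered by Theorem \ref{TheoremSecondMain}, but the hypothesis that $F_0$ has no rationally defined CM forces the CM field $K_E$ of any CM curve $E_{/F_0}$ to lie outside $F_0$, so $G$ contains a non-Cartan element coming from $\Gal(F_0 K_E/F_0)$. By the classical Serre--Deuring description, for all but finitely many $\ell$ (those ramified in $K_E$ or dividing the conductor of $\textrm{End}(E)$; since only finitely many CM orders arise over a fixed number field $F_0$, this exceptional set is uniformly finite in $E$), the image $G$ equals the normalizer $N(C)$ of the associated Cartan subgroup $C$ (split if $\ell$ splits in $K_E$, non-split otherwise), and the same orbit analysis as above yields orbit sizes divisible by $2$.

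Finally, enlarge $\ell_0$ to also absorb the CM-exceptional primes. For each of the finitely many $\ell\le\ell_0$, the orbit size $[F_0(R):F_0]$ divides $|\GL_2(\Z/\ell\Z)|=\ell(\ell-1)^2(\ell+1)$, whose prime divisors all lie in $\lbrace p\text{ prime}:p\le\ell_0+1\rbrace$; adjoining these to $\lbrace 2\rbrace$ yields a finite $F_0$-admissible set $A_{F_0}$. I expect the main obstacle to be the orbit-counting in the split Cartan case, specifically the subcase $G\subseteq C_s(\ell)$ where no swap element is present: there one must combine the index divisibility $[N_s(\ell):G]\mid\gcd(\ell-1,e)$ with the explicit structure of $\cD^e\subseteq G$ to retain a factor of $2$, which requires carefully tracking the $2$-adic valuation of $\ell-1$ through the exponentiation.
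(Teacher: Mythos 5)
Your high-level strategy (reduce to large $\ell$, show every nontrivial orbit has even size, then absorb the finitely many small-$\ell$ contributions into $A_{F_0}$) is exactly the paper's. The differences, and the one real gap, are below.

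\textbf{Your stated ``main obstacle'' does not exist.} You worry about the subcase $G\subseteq C_s(\ell)$ where there is no swap element. But $C_s(\ell)\subseteq B(\ell)$, so $G\subseteq C_s(\ell)$ would give $E$ an $F_0$-rational $\ell$-isogeny, which is ruled out for $\ell\not\in S_{F_0}$ by Larson--Vaintrob. Once you discard the small exceptional primes (ramified in $F_0$, below the bound, in $S_{F_0}$, and $\ell=37,73$) you automatically have $G\not\subseteq C_s(\ell)$ in the split Cartan case. This is the observation the paper leads with before it does any orbit counting, and it collapses the case analysis you were worried about.

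\textbf{The split-Cartan orbit argument.} You reach for orbit divisibility (Lemma \ref{LemmaOrbitDivisibility} / Remark \ref{remarkOrbitDivisibilityForNormCartans}) combined with $2$-adic bookkeeping through $[N_s(\ell):G]\mid\gcd(\ell-1,e)$; that route can be made to work, since $v_2(\gcd(\ell-1,e))\le 1$ while $v_2(2(\ell-1))\ge 2$ and $v_2((\ell-1)^2)\ge 2$. The paper instead uses the stronger structural output of Theorem \ref{TheoremSecondMain}: $Z(\ell)\subseteq G$, together with the classification of the stabilizer $H:=\rho_{E,\ell,P,Q}(G_{F_0(R)})$ from \cite[Lemma 6.6]{LR13} to show $Z(\ell)\cap H=1$, giving $(\ell-1)\mid[G:H]=[F_0(R):F_0]$ outright. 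The paper's version is cleaner (no case split on axis vs.\ off-axis, no valuation tracking), but yours is not wrong.

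\textbf{The CM detour.} You are right that Theorem \ref{TheoremSecondMain} is stated only for non-CM curves, and the paper's proof of this proposition nominally invokes it for an arbitrary $E_{/F_0}$. However, nothing in the proof of Theorem \ref{TheoremSecondMain} (Proposition \ref{ImageLiesInNormalizerOfCartan}, Theorem \ref{TheoremImageOfInertia}, the inertia analysis) actually uses non-CM; the only role of the ``no rationally defined CM'' hypothesis is to feed into Larson--Vaintrob's isogeny bound, which applies to all elliptic curves over $F_0$. So the paper implicitly treats CM and non-CM uniformly, which is the cleaner way to close the gap you noticed. Your Serre--Deuring workaround can be made to work, but it imports additional facts (finiteness of CM orders realizable over $F_0$, full image in the normalizer for all but finitely many $\ell$) that are avoidable, and as written it is not fully justified; if you go this route you should also verify evenness of the resulting orbit sizes rather than just asserting ``the same orbit analysis yields orbit sizes divisible by $2$.''

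Net assessment: correct architecture, correct conclusion in the surjective and non-split cases, a viable but heavier alternative in the split case, and a self-inflicted difficulty (the $G\subseteq C_s(\ell)$ subcase) that the $S_{F_0}$ exclusion already removes.
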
 
Theorem \ref{THEOREMBASECHANGEOVERF} is an immediate consequence of combining Propositions \ref{TorsionDoesntChange} and \ref{AFFinite}.
Sans including Section \ref{SectionCMCase}, the rest of our paper is devoted to proving Theorem \ref{THEOREMSECONDMAIN} and then using it to prove Proposition \ref{AFFinite}. Our proofs will involve an analysis of the mod-$\ell$ Galois representation of an arbitrary elliptic curve defined over $F_0$. As noted in Remark \ref{RemarkLVHypothesis}, the assumption that both GRH is true and that $F_0$ has no rationally defined CM will rule out the case where an elliptic curve $E_{/F_0}$ has an $F_0$-rational isogeny of uniformly large prime degree. 
\section{Orbits Under Subgroups of $\GL_2(\Z/\ell\Z)$}\label{SectionOrbitsGalReps}
\subsection{Basic facts about mod-$N$ Galois representations of elliptic curves}
Fix an algebraic extension $F_0/\Q$ and an elliptic curve $E_{/F_0}$. For each integer $N\in\Z^+$, one has an action of the absolute Galois group $G_{F_0}:=\Gal(\oQ/F_0)$ on the set $E[N]$ of $N$-torsion points on $E$. This affords us the \textit{mod-$N$ Galois representation}
\[
\rho_{E,N}\colon G_{F_0}\rightarrow\Aut(E[N]).
\]
$E[N]$ is a rank two $\Z/N\Z$-module, so 
fixing a basis $\lbrace P,Q\rbrace$ for $E[N]$, we have an explicit isomorphism $\Aut(E[N])\cong \GL_2(\Z/N\Z)$. In this case, we can write
\[
\rho_{E,N,P,Q}\colon G_{F_0}\rightarrow \GL_2(\Z/N\Z).
\]
The kernel of this action is $\Gal(\oQ/F_0(E[N]))$, whence we have an isomorphism between $\rho_{E,N,P,Q}(G_{F_0})$ and $\Gal(F_0(E[N])/F_0)$.
We will often work with a fixed basis dependent on the context, but abuse notation and write $G:=\rho_{E,N}(G_{F_0}):=\rho_{E,N,P,Q}(G_{F_0})
$.

It is a standard result that the determinant of the image of $\rho_{E,N}$ is the mod-$N$ cyclotomic character
\[
\chi_N\colon G_{F_0}\rightarrow (\Z/N\Z)^\times.
\]
The fixed field of $\chi_N$ is the $N$-cyclotomic field $F_0(\zeta_N)$, where $\zeta_N$ denotes any primitive $N$'th root of 1. From $\det ( \rho_{E,N}(G_{F_0}))= \chi_N(G_{F_0})$ we find that
\begin{equation}\label{ModEllCharSizeDividesSizeOfGalRep}
\#\chi_N(G_{F_0})\mid \# G.
\end{equation}
If no prime divisors of $N$ ramify in $F_0$, then $\chi_N$ is surjective, i.e., $\det(G)=(\Z/N\Z)^\times$.

For a torsion point $R\in E[N]$, we will write $\oo_G(R)$ for the orbit of $R$ under $G$. By the Orbit-Stabilizer Theorem, we find that
\[
\#\oo_G(R)=[G:\Stab_G(R)]=[\Gal(F_0(E[N])/F_0):\Gal(F_0(E[N])/F_0(R))].
\]
In particular, the degree of $R$ over $F_0$ is the size of its orbit under $G$, 
\[
\#\oo_G(R)=[F_0(R):F_0].
\]
Therefore, the problem of determining prime factors of degrees of torsion points $R\in E[N]$ over $F_0$ is equivalent to determining prime factors of the length of orbits of $E[N]$ under $G_{F_0}$.

Let $L/F_0$ be any finite extension with $L\subseteq F_0(E[N])$.
For a basis $\lbrace P,Q\rbrace$ of $E[N]$, we let $G:=\rho_{E,N, P, Q}(G_{F_0})$ and $H:=\rho_{E,N, P,Q}(G_L)$. Since $G_L\subseteq G_{F_0}$, we have $H\subseteq G$.
Observe that
\begin{equation}\label{EqIndexOfGalSubrep}
[G:H]= [L:F_0].
\end{equation}
This follows from both
\[
[G:H]=\frac{[F_0(E[N]):F_0]}{[L(E[N]):L]}
\]
and $L(E[N])=F_0(E[N])$. In particular, for $L:=F_0(R)$ we get by \eqref{EqIndexOfGalSubrep} the equality
\begin{equation}\label{IndexGalRepOrbitSize}
[G:\rho_{E,N}(G_{F_0(R)})]= \#\oo_G(R).
\end{equation}
It's worth noting that the size of any orbit under $G$ is invariant under a change of basis for $E[N]$.  

\subsection{Subgroups of $\GL_2(\Z/\ell\Z)$}
Our proof of Proposition \ref{AFFinite} uses the classification of subgroups of $\GL_2(\Z/\ell\Z)$ seen in \cite{Ser72}, essentially due to Dickson \cite{Dic58}. We will review this classification.

Throughout the following, we let $\ell\geq 5$ be a prime. We define the \textit{split Cartan subgroup mod-$\ell$} as the subgroup of diagonal matrices,
\[
C_s(\ell):=\left\lbrace \begin{bmatrix}
a&0\\
0&d
\end{bmatrix}:a,d\in\F_\ell^\times\right\rbrace.
\]

Fix the least positive integer $\epsilon$ which generates $\F_\ell^\times$. Then we define the \textit{non-split Cartan subgroup mod-$\ell$} as the regular representation of $\F_\ell[\sqrt{\epsilon}]^\times$ acting on itself via multiplication with respect to the basis $\lbrace 1,\sqrt{\epsilon}\rbrace$; this representation is
\[
C_{ns}(\ell)=\left\lbrace \begin{bmatrix}
a&b\epsilon\\
b&a
\end{bmatrix}: (a,d)\neq (0,0)\in \F_\ell\times\F_\ell\right\rbrace.
\]
Note that $C_{ns}(\ell)\cong \F_{\ell^2}^\times$. 
For these two Cartan subgroups, one has their normalizers
\begin{align*}
N_s(\ell)&=C_s(\ell)\cup \begin{bmatrix}
0&1\\
1&0
\end{bmatrix}C_s(\ell)\\
&=\left\lbrace \begin{bmatrix}
a&0\\
0&d
\end{bmatrix}, \begin{bmatrix}
0&d\\
a&0
\end{bmatrix}:a,d\in \F_\ell^\times\right\rbrace
\end{align*}
and
\begin{align*}
N_{ns}(\ell&)=C_{ns}(\ell)\cup \begin{bmatrix}
1&0\\
0&-1
\end{bmatrix}C_{ns}(\ell)\\
&=\left\lbrace 
\begin{bmatrix}
a&b\epsilon\\
b&a
\end{bmatrix},
\begin{bmatrix}
a&b\epsilon\\
-b&-a
\end{bmatrix}: (a,b)\neq (0,0)\in \F_\ell\times\F_\ell
\right\rbrace.
\end{align*}
Clearly, both Cartan subgroups have index two in their respective normalizers.

Let us also define the \textit{Borel subgroup mod-$\ell$} as the subgroup of upper triangular matrices,
\[
B(\ell):=\left\lbrace \begin{bmatrix}
a&b\\
0&d
\end{bmatrix}:a,d\in \F_\ell^\times\right\rbrace.
\]
In the following section, we will also study the \textit{semi-Cartan subgroup mod-$\ell$}
\[
\cD:=\left\lbrace \begin{bmatrix}
a&0\\
0&1
\end{bmatrix}:a\in \F_\ell^\times\right\rbrace,
\]
and later on we will need the mod-$\ell$ subgroup
\[
G(\ell):=\left\langle \begin{bmatrix}
1&0\\
0&-1
\end{bmatrix},C_{ns}(\ell)^3\right\rangle.
\]

In his seminal paper on the adelic open image theorem for elliptic curves, Serre \cite{Ser72} analyzed the various mod-$\ell$ Galois representations of a fixed elliptic curve over a number field. To do this, he used a classification of subgroups of $\GL_2(\Z/\ell\Z)$ \cite[\S 2]{Ser72}. Towards proving Theorem \ref{THEOREMSECONDMAIN}, we will use this classification to determine explicit subgroups of our mod-$\ell$ Galois representation $\rho_{E,\ell}(G_{F_0})$.
\begin{theorem}[Classification of subgroups of $\GL_2(\Z/\ell\Z)$]\label{DicksonSerreClassification}
Let $\ell\geq 5$ be a prime, and let $G$ be a subgroup of $\emph{GL}_2(\Z/\ell\Z)$. 
\begin{enumerate}[1.]
\item If $\ell\mid \# G$, then one of the following holds:
\begin{enumerate}[a.]
\item $G$ contains $\emph{SL}_2(\Z/\ell\Z)$.
\item $G$ is contained in $B(\ell)$, up to conjugacy.
\end{enumerate}
\item If $\ell\nmid \#G$, then one of the following holds:
\begin{enumerate}[a.]
\item $G$ is contained in $N_s(\ell)$ or $N_{ns}(\ell)$, up to conjugacy.
\item The image $\overline{G}$ of $G$ in $\emph{PGL}_2(\Z/\ell\Z):=\emph{GL}_2(\Z/\ell\Z)/\F_\ell^
\times$ is isomorphic to one of the groups $A_4$, $S_4$ or $A_5$.
\end{enumerate}
\end{enumerate}
\end{theorem}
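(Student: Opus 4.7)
The plan is to split the analysis by whether $\ell$ divides $\#G$, exactly as the statement suggests, and in each case to argue via the geometry of the natural $G$-action on the projective line $\mathbb{P}^1(\F_\ell)$.

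For case (1), where $\ell\mid \#G$, I would start by invoking Cauchy's theorem to produce an element of order $\ell$. Every element of order $\ell$ in $\GL_2(\F_\ell)$ is conjugate to a transvection $\bigl[\begin{smallmatrix}1&1\\0&1\end{smallmatrix}\bigr]$, and any Sylow $\ell$-subgroup is one-dimensional, consisting of transvections with a common fixed line in $\F_\ell^2$. I would then split according to the number of Sylow $\ell$-subgroups of $G$. If there is a unique one, it is normal in $G$; since its normalizer in $\GL_2(\F_\ell)$ is the Borel stabilizing that line, $G\subseteq B(\ell)$ after conjugation. If $G$ has two distinct Sylow $\ell$-subgroups, they fix two distinct lines, and I would invoke the standard fact that $\SL_2(\F_\ell)$ is generated by transvections along any two distinct directions, yielding $\SL_2(\Z/\ell\Z)\subseteq G$.

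For case (2), with $\gcd(\ell,\#G)=1$, I would pass to the image $\overline{G}\subseteq \PGL_2(\F_\ell)$ and apply Dickson's classification of finite subgroups of $\PGL_2$ of order prime to the characteristic: they are cyclic, dihedral, $A_4$, $S_4$, $A_5$, or conjugate to $\PSL_2(\F_{\ell^r})/\mathrm{PGL}_2(\F_{\ell^r})$. The last family is ruled out immediately by the coprimality hypothesis. A cyclic $\overline{G}$ has either two, one, or zero fixed points on $\mathbb{P}^1(\F_\ell)$: two fixed points force $G$ into $C_s(\ell)$ after conjugation; zero fixed points mean $\overline{G}$ acts on an orbit structure coming from $\mathbb{P}^1(\F_{\ell^2})\setminus \mathbb{P}^1(\F_\ell)$, and one shows $G\subseteq C_{ns}(\ell)$ via diagonalization over $\F_{\ell^2}$; a single fixed point would force a unipotent element, contradicting $\ell\nmid\#G$. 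For dihedral $\overline{G}$, the cyclic index-two subgroup lands in a Cartan by the previous analysis, and an involution permuting the two (geometric) fixed points conjugates the Cartan to itself nontrivially, placing $G$ in the corresponding normalizer $N_s(\ell)$ or $N_{ns}(\ell)$. The exceptional $\overline{G}\in\{A_4,S_4,A_5\}$ case is simply recorded.

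The workhorse I expect to lean on, and the main technical content of the proof, is Dickson's theorem on finite subgroups of $\PSL_2$ of a prime field; I would quote it rather than reprove it. The other delicate point is the lifting problem from $\PGL_2(\F_\ell)$ back to $\GL_2(\F_\ell)$: a subgroup $\overline{G}$ that normalizes the image of a Cartan need not \emph{a priori} lift to a subgroup normalizing the Cartan itself, so one must verify that the scalars $Z(\ell)\cdot G$ indeed lie inside the normalizer by checking that any preimage conjugates the split (resp.\ non-split) Cartan to itself. Splitting split from non-split Cartan amounts to whether the common eigenvalues lie in $\F_\ell$ or in $\F_{\ell^2}\setminus\F_\ell$, which is governed by the action on $\mathbb{P}^1(\F_\ell)$ already analyzed. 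Since this is a classical result essentially due to Dickson and stated verbatim in \cite[\S 2]{Ser72}, my proof plan would amount to assembling these steps and citing Dickson's classification as a black box.
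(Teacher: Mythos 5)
The paper does not prove this theorem at all: it is quoted as a classical result of Dickson, following the exposition in \cite[\S 2]{Ser72}, and no argument is given. So there is no paper proof to compare against; what you have written is a genuine reconstruction of the standard derivation, and it is correct in outline. The Sylow analysis in case (1) is the right reduction: a unique Sylow $\ell$-subgroup is normal with normalizer exactly $B(\ell)$, while two distinct Sylow $\ell$-subgroups are unipotent radicals fixing transverse lines and so generate $\SL_2(\Z/\ell\Z)$ after a suitable conjugation. In case (2) you correctly pass to $\PGL_2(\F_\ell)$, invoke Dickson's list (cyclic, dihedral, $A_4$, $S_4$, $A_5$, plus the $\PSL_2$/$\PGL_2$ subfield subgroups which are killed by the coprimality hypothesis), and — importantly — you flag the lifting issue, which is the only real content beyond Dickson's theorem: a commutative $G$ of prime-to-$\ell$ order is simultaneously diagonalizable over $\overline{\F_\ell}$ and hence lands in a (split or non-split) Cartan, and anything in $G$ normalizing a subgroup of that Cartan which contains a regular semisimple element must normalize the full Cartan, since the Cartan is the centralizer of that element. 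Two small tidying remarks. First, for a semisimple non-scalar element the number of fixed points on $\mathbb{P}^1(\F_\ell)$ is either $2$ or $0$, never $1$: a rational eigenvalue forces the second eigenvalue to be rational as well, so the ``one fixed point'' contingency only arises for elements that are unipotent up to scalar, and those are already excluded by $\ell\nmid\#G$. Second, in the dihedral case you should note that the index-two cyclic subgroup $\overline{C}\subseteq\overline{G}$ is nontrivial (otherwise $\overline{G}\cong C_2$ is just the cyclic case), so that its preimage in $G$ contains a regular semisimple element and the centralizer/normalizer argument applies. With those caveats, your plan is sound and, since you end by citing Dickson as a black box, it amounts to the same dependency the paper has.
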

\subsection{Orbits under the normalizer of a Cartan subgroup}
Let $V:=\lbrace e_1,e_2\rbrace$ be the two-dimensional $\F_\ell$-vector space spanned by the unit vectors $e_1:=\begin{bmatrix}
1\\0
\end{bmatrix}$ and $e_2:=\begin{bmatrix}
0\\1
\end{bmatrix}$. Then one has an action of $\GL_2(\Z/\ell\Z)$ on $V$ via left multiplication.
For a subgroup $G\subseteq \GL_2(\Z/\ell\Z)$ and a vector $v\in V^\bullet:=V\ssm \lbrace 0\rbrace$, we will use $\oo_G(v)$ to denote the orbit of $v$ under $G$. We will also use $\langle v\rangle$ to denote the subspace spanned by $v$.

In our applications, $V:=E[\ell]$ is the $\ell$-torsion subgroup of an elliptic curve.
As noted earlier, by the Orbit-Stabilizer Theorem one has for any point $R\in E[\ell]$ that the size of its orbit $\oo_{\rho_{E,\ell}(G_{F_0})}(R)$ is equal to its degree $[F_0(R):F_0]$. In this subsection, we will see how we can exploit this to determine factors of the degree via the classification in Theorem \ref{DicksonSerreClassification}.

The following proposition describes the orbits of $V^\bullet$ under the action of $N_s(\ell)$. 
\begin{proposition}\label{NormalizerSplitOrbits}
$V^\bullet$ has two $N_s(\ell)$-orbits: they are $\oo_{N_s(\ell)}(e_1)= \oo_{N_s(\ell)}(e_2)$ and $\oo_{N_s(\ell)}(e_1+e_2)$, which are of size $2(\ell-1)$ and $(\ell-1)^2$, respectively.
\end{proposition}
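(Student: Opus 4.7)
The plan is to compute both orbits directly from the explicit description of $N_s(\ell)$ given just above the proposition, and then verify that their sizes sum to $\#V^\bullet = \ell^2-1$, which will force these to be the only two orbits.

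First I would analyze $\oo_{N_s(\ell)}(e_1)$. Applying a diagonal matrix $\begin{bmatrix} a & 0 \\ 0 & d\end{bmatrix}$ to $e_1$ yields $a\, e_1$ for arbitrary $a \in \F_\ell^\times$, producing all nonzero scalar multiples of $e_1$; applying an anti-diagonal matrix $\begin{bmatrix} 0 & d \\ a & 0\end{bmatrix}$ to $e_1$ yields $a\, e_2$ for arbitrary $a \in \F_\ell^\times$, producing all nonzero scalar multiples of $e_2$. Hence $\oo_{N_s(\ell)}(e_1) = \F_\ell^\times e_1 \cup \F_\ell^\times e_2$, which has size $2(\ell-1)$; in particular $e_2 \in \oo_{N_s(\ell)}(e_1)$, so $\oo_{N_s(\ell)}(e_1) = \oo_{N_s(\ell)}(e_2)$.

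Next I would analyze $\oo_{N_s(\ell)}(e_1+e_2)$. Diagonal matrices send $e_1+e_2$ to $a\, e_1 + d\, e_2$ for arbitrary $a,d \in \F_\ell^\times$, covering every vector with both coordinates nonzero; anti-diagonal matrices merely reshuffle these pairs. So $\oo_{N_s(\ell)}(e_1+e_2) = \{x\, e_1 + y\, e_2 : x, y \in \F_\ell^\times\}$, which has size $(\ell-1)^2$.

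To conclude, I would check the orbit sizes add correctly:
\[
2(\ell-1) + (\ell-1)^2 = (\ell-1)(\ell+1) = \ell^2 - 1 = \# V^\bullet,
\]
so every element of $V^\bullet$ lies in one of the two orbits exhibited, and these are disjoint since the first consists of vectors with exactly one nonzero coordinate while the second consists of vectors with both coordinates nonzero. There is no real obstacle here — the proof is a direct calculation, and the only thing to be careful about is verifying that the anti-diagonal elements add nothing new to $\oo_{N_s(\ell)}(e_1+e_2)$ beyond what the split Cartan already provides.
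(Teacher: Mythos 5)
Your proof is correct and takes essentially the same direct-computation approach as the paper: both explicitly compute the two orbits from the description of $N_s(\ell)$ and note they partition $V^\bullet$. Your size-counting check at the end is a fine alternative to the paper's observation that the two orbits are literally $\langle e_1\rangle\cup\langle e_2\rangle\ssm\{0\}$ and its complement in $V^\bullet$.
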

\begin{proof}
Since both $\begin{bmatrix}
a&0\\
0&d
\end{bmatrix}\cdot e_1=ae_1$ and $\begin{bmatrix}
0&d\\
a&0
\end{bmatrix}\cdot e_1=ae_2$, we find that $\oo_{N_s(\ell)}(e_1)=\oo_{N_s(\ell)}(e_2)=\lbrace xe_1+ye_2:\textrm{either }x=0~\textrm{or }y=0\rbrace=\langle e_1\rangle\cup\langle e_2\rangle\ssm \lbrace 0\rbrace$. 
On the other hand, $\begin{bmatrix}
a&0\\
0&d
\end{bmatrix}\cdot (e_1+e_2)=\begin{bmatrix}
0&a\\
d&0
\end{bmatrix}\cdot (e_1+e_2)=ae_1+de_2$, and so $\oo_{N_s(\ell)}(e_1+e_2)=\lbrace xe_1+ye_2: xy\neq 0\rbrace=V^\bullet\ssm (\langle e_1\rangle\cup\langle e_2\rangle)$.
\end{proof}
The following proposition describes the orbits of $V^\bullet$ under $C_{ns}(\ell)$, and thus $N_{ns}(\ell)$.
\begin{proposition}\label{NormalizerNonsplitOrbits}
The group $ C_{ns}(\ell)$ acts transitively on $V^\bullet$. In particular, for each $v\in V^\bullet$ we have $\#\oo_{N_{ns}(\ell)}(v)=\#\oo_{C_{ns}(\ell)}(v)=\ell^2-1$.
\end{proposition}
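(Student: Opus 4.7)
The plan is to exploit the fact already noted in the paper that $C_{ns}(\ell)$ is, by construction, the regular representation of $\F_\ell[\sqrt{\epsilon}]^\times \cong \F_{\ell^2}^\times$ acting on $\F_\ell[\sqrt{\epsilon}]$ by multiplication with respect to the basis $\{1,\sqrt{\epsilon}\}$. Under the $\F_\ell$-linear isomorphism $\varphi \colon V \to \F_{\ell^2}$ sending $a e_1 + b e_2 \mapsto a + b\sqrt{\epsilon}$, the action of $C_{ns}(\ell)$ on $V$ corresponds to multiplication inside $\F_{\ell^2}$.

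To make this precise, I would compute directly that
\[
\begin{bmatrix} a & b\epsilon \\ b & a \end{bmatrix}\begin{bmatrix} c \\ d \end{bmatrix} = \begin{bmatrix} ac + bd\epsilon \\ bc + ad \end{bmatrix},
\]
and observe that the right-hand side corresponds under $\varphi$ to $(ac+bd\epsilon) + (ad+bc)\sqrt{\epsilon} = (a+b\sqrt{\epsilon})(c+d\sqrt{\epsilon})$. Thus $\varphi$ intertwines the $C_{ns}(\ell)$-action on $V$ with the multiplication action of $\F_{\ell^2}^\times$ on $\F_{\ell^2}$. Since $\F_{\ell^2}^\times$ acts simply transitively on $\F_{\ell^2}^\bullet = \F_{\ell^2}^\times$, the same holds for $C_{ns}(\ell)$ on $V^\bullet$. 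In particular, $\#\oo_{C_{ns}(\ell)}(v) = \ell^2 - 1$ for every $v \in V^\bullet$.

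For the claim about $N_{ns}(\ell)$, the containment $C_{ns}(\ell) \subseteq N_{ns}(\ell)$ yields $\oo_{C_{ns}(\ell)}(v) \subseteq \oo_{N_{ns}(\ell)}(v) \subseteq V^\bullet$, and since the first and last sets both have size $\ell^2 - 1$, all three coincide. This gives $\#\oo_{N_{ns}(\ell)}(v) = \ell^2-1$ as well.

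There is no real obstacle here; the only step requiring care is verifying the compatibility of the matrix action with multiplication in $\F_{\ell^2}$, which is a short explicit computation. Once that is in hand, transitivity on $V^\bullet$ and the orbit size follow immediately from the fact that $\F_{\ell^2}^\times$ is a group acting on itself by left multiplication.
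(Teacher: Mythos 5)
Your proof is correct and rests on the same underlying observation as the paper's: the paper simply exhibits directly that $\begin{bmatrix} x & y\epsilon \\ y & x \end{bmatrix} \in C_{ns}(\ell)$ sends $e_1$ to $v = xe_1 + ye_2$, which is precisely the image under the regular representation of multiplication by $x + y\sqrt{\epsilon}$ applied to $1$, so your route through the $\F_{\ell^2}$-isomorphism is the same argument phrased a touch more conceptually.
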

\begin{proof}
For any $v:=xe_1+ye_2\in V^\bullet$, one sees that the matrix $\begin{bmatrix}
x&y\epsilon\\
y&x
\end{bmatrix}\in C_{ns}(\ell)$ takes $e_1$ to $v$. 
\end{proof}
Recall the following basic fact about orbit divisibility for subgroups.
\begin{lemma}\label{LemmaOrbitDivisibility}
Let a finite group $G$ act on a set $X$. Then for any subgroup $H\subseteq G$, one has for all $x\in X$ that
\[
\#\oo_G(x)\mid [G:H]\cdot \#\oo_H(x).
\]
\end{lemma}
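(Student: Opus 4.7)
The plan is to reduce the lemma to a pair of applications of the Orbit--Stabilizer Theorem and multiplicativity of subgroup indices. First I would introduce shorthand: set $S_G:=\Stab_G(x)$ and $S_H:=\Stab_H(x)$, and note the tautological equality $S_H=S_G\cap H$. By Orbit--Stabilizer, $\#\oo_G(x)=[G:S_G]$ and $\#\oo_H(x)=[H:S_H]$, so the assertion
\[
\#\oo_G(x)\mid [G:H]\cdot \#\oo_H(x)
\]
is equivalent to proving
\[
[G:S_G]\mid [G:H]\cdot [H:S_H].
\]

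Next I would exploit the two nested chains of subgroups $S_H\subseteq H\subseteq G$ and $S_H\subseteq S_G\subseteq G$. Multiplicativity of the index along the first chain gives $[G:H]\cdot [H:S_H]=[G:S_H]$, and along the second chain, $[G:S_H]=[G:S_G]\cdot [S_G:S_H]$. Combining these two identities yields
\[
[G:H]\cdot [H:S_H]=[G:S_G]\cdot [S_G:S_H],
\]
which directly exhibits $[G:S_G]$ as a divisor of $[G:H]\cdot [H:S_H]$, as required.

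The argument is entirely elementary and I do not anticipate any genuine obstacle. The one ingredient worth flagging is multiplicativity of the index $[A:C]=[A:B]\cdot [B:C]$ for nested subgroups $C\subseteq B\subseteq A$ of a finite group; this holds without normality assumptions, since concatenating a left transversal of $C$ in $B$ with a left transversal of $B$ in $A$ furnishes a left transversal of $C$ in $A$. In particular no hypothesis on $H$, $S_G$, or $S_H$ being normal in $G$ is used, which is important because in the intended applications to mod-$\ell$ Galois representations the stabilizers of torsion points are rarely normal.
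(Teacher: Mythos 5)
Your proof is correct, and it is worth noting that the paper actually states this lemma without any proof at all, introducing it merely as a ``basic fact.'' The route you take — passing to stabilizers via Orbit--Stabilizer, observing $\Stab_H(x)=\Stab_G(x)\cap H$, and then comparing the two index chains $\Stab_H(x)\subseteq H\subseteq G$ and $\Stab_H(x)\subseteq\Stab_G(x)\subseteq G$ — is the standard and most direct way to establish it; the key identity $[G:H][H:\Stab_H(x)]=[G:\Stab_G(x)][\Stab_G(x):\Stab_H(x)]$ exhibits the divisibility cleanly, and your remark that index multiplicativity requires no normality hypothesis is a worthwhile sanity check given the intended application to mod-$\ell$ image groups and their point stabilizers, which are generally not normal.
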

\begin{remark}\label{remarkOrbitDivisibilityForNormCartans}
Suppose we have an elliptic curve $E_{/F_0}$ for which $G:=\rho_{E,\ell,P,Q}(G_{F_0})$ is contained in a subgroup $N(\ell)\subseteq \GL_2(\Z/\ell\Z)$. Then by Lemma \ref{LemmaOrbitDivisibility}, for each $R\in E[\ell]^\bullet$ one has 
\begin{equation}\label{EQIndexDivNormCartanTorsion}
\#\oo_{N(\ell)}(R)\mid [N(\ell):G]\cdot[F_0(R):F_0].
\end{equation}
By Propositions \ref{NormalizerSplitOrbits} and \ref{NormalizerNonsplitOrbits}, this can give us divisibility information about the degree $[F_0(R):F_0]$. For example, Proposition \ref{NormalizerNonsplitOrbits} implies that if $N(\ell)=N_{ns}(\ell)$ then
\[
\ell^2-1\mid [N_{ns}(\ell):G]\cdot[F_0(R):F_0].
\]
When $N(\ell)=N_s(\ell)$, using Proposition \ref{NormalizerSplitOrbits}
one has for nonzero $R\in \langle P\rangle\cup\langle Q\rangle$ that
\[
2(\ell-1)\mid [N_s(\ell):G]\cdot [F_0(R):F_0],
\]
and when $R\not\in \langle P\rangle\cup\langle Q\rangle$ one has instead that
\[
(\ell-1)^2\mid [N_s(\ell):G]\cdot [F_0(R):F_0].
\]
To prove Proposition \ref{AFFinite}, we will use this remark to show that when $\ell$ is sufficiently large with respect to $F_0$, one has for all nontrivial $R\in E[\ell]$ that $[F_0(R):F_0]$ is even.
\end{remark}
\section{The Image of Inertia}
In this section, we will describe the image of inertia under our mod-$\ell$ Galois representation when $\ell\gg_{F_0}0$. This will be towards proving Theorem \ref{THEOREMSECONDMAIN}.
\subsection{The image lies in the normalizer of a Cartan subgroup}
In the following subsection, we will show that under sufficient assumptions, for an elliptic curve $E_{/F_0}$ and a prime $\ell\in\Z^+$, the image $\rho_{E,\ell}(G_{F_0})$ either equals $\GL_2(\Z/\ell\Z)$ or is contained in the normalizer of a Cartan subgroup.
\begin{proposition}\label{ImageLiesInNormalizerOfCartan}
Assume that GRH is true, and fix a number field $F_0$ without rationally defined CM. Then for all primes $\ell> 15[F_0:\Q]+1$ unramified in $F_0$ with $\ell\not\in S_{F_0}$ (see Remark \ref{RemarkLVHypothesis}), for any elliptic curve $E_{/F_0}$ the image $\rho_{E,\ell}(G_{F_0})$ of its mod-$\ell$ Galois representation is either equal to $\emph{GL}_2(\Z/\ell\Z)$ or contained in $N_s(\ell)$ or $N_{ns}(\ell)$ up to conjugacy.
\end{proposition}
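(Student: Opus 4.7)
The plan is to apply the Dickson--Serre classification (Theorem \ref{DicksonSerreClassification}) to $G := \rho_{E,\ell}(G_{F_0}) \subseteq \GL_2(\Z/\ell\Z)$ and rule out every outcome except $G = \GL_2(\Z/\ell\Z)$ or $G$ being conjugate into $N_s(\ell)$ or $N_{ns}(\ell)$. The work divides according to whether $\ell \mid \#G$.

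Suppose first $\ell \mid \#G$. By Theorem \ref{DicksonSerreClassification}(1) either $\SL_2(\Z/\ell\Z) \subseteq G$ or $G$ is conjugate into $B(\ell)$. In the $\SL$ subcase I would use that $\ell$ unramified in $F_0$ forces $F_0 \cap \Q(\zeta_\ell) = \Q$ (since $\Q(\zeta_\ell)/\Q$ is totally ramified at $\ell$, so its only unramified-at-$\ell$ subfield is $\Q$); hence the cyclotomic character $\chi_\ell$ is surjective on $G_{F_0}$, giving $\det(G) = (\Z/\ell\Z)^\times$, and combined with $\SL_2(\Z/\ell\Z) \subseteq G$ this yields $G = \GL_2(\Z/\ell\Z)$. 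In the Borel subcase, $G$ fixes a line in $E[\ell]$, equivalently $E$ admits an $F_0$-rational cyclic isogeny of prime degree $\ell$; our two standing hypotheses (GRH and no rationally defined CM on $F_0$) are exactly those required to invoke \cite[Theorem 1]{LV14}, which then forces $\ell \in S_{F_0}$, contradicting the assumption $\ell \notin S_{F_0}$.

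Suppose next $\ell \nmid \#G$. Theorem \ref{DicksonSerreClassification}(2) gives either the desired alternative ($G$ conjugate into $N_s(\ell)$ or $N_{ns}(\ell)$) or that the projective image $\overline{G} \subseteq \PGL_2(\Z/\ell\Z)$ is isomorphic to one of $A_4, S_4, A_5$. I would rule out the exceptional case using Theorem \ref{TheoremImageOfInertia} at a prime $\mathfrak{p}$ of $F_0$ above $\ell$. Since the condition $\ell \nmid \#G$ forbids unipotents (and hence multiplicative reduction at $\mathfrak{p}$), Serre's description of the image of inertia is restricted to the potentially good ordinary and potentially good supersingular regimes, in each of which the image $\rho_{E,\ell}(I_\mathfrak{p})$ projects onto a cyclic subgroup of $\overline{G}$ whose order grows linearly in $\ell$. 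Every cyclic subgroup of each of $A_4, S_4, A_5$ has order at most $5$, so the hypothesis $\ell > 15[F_0:\Q]+1$ is precisely calibrated so that --- even after dividing out by the worst-case tame exponent that appears when $E$ acquires good reduction only over a nontrivial tame extension --- the projective inertia order strictly exceeds $5$, producing a contradiction.

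I expect the main obstacle to be the uniform treatment of the potentially-good additive reduction case, where one must pass to a minimal tame extension $F'/F_0$ over which $E$ acquires good reduction and track how the image of inertia over $F_0$ sits as an extension of a tame-cyclic quotient by the image of inertia over $F'$. The bookkeeping has to be tight enough that the worst possible divisor in $\{1,2,3,4,6,12\}$ contributed by the tame piece still leaves a cyclic subgroup of $\overline{G}$ of order exceeding $5$, and one must verify that the constant $15[F_0:\Q]+1$ coming from Serre's quantitative image-of-inertia statement is the correct uniform threshold.
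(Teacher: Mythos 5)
Your overall outline coincides with the paper's: run the Dickson--Serre classification (Theorem \ref{DicksonSerreClassification}), get surjectivity in the $\SL_2$-containing case from surjectivity of the mod-$\ell$ cyclotomic character (using that $\ell$ is unramified in $F_0$), dispose of the Borel case by observing that it produces an $F_0$-rational $\ell$-isogeny, forbidden by \cite[Theorem 1]{LV14} since $\ell\notin S_{F_0}$, and then eliminate the exceptional projective images $A_4,S_4,A_5$. The one substantive divergence is in that last step: the paper simply invokes Etropolski's result \cite[Proposition 2.6]{Etr}, which is exactly the statement that the exceptional projective images force $\ell\leq 15[F_0:\Q]+1$, whereas you try to re-derive such a bound from scratch using the image of inertia (Theorem \ref{TheoremImageOfInertia}).

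That re-derivation, as sketched, does not recover the stated constant, and you flag the uncertainty yourself. With $\ell$ unramified in $F_0$, the inertia group at a prime $\Cl\mid\ell$ projects (after passing through tame inertia) to a cyclic subgroup of $\overline{G}$ of order at least $(\ell-1)/\gcd(\ell-1,e)$ or $(\ell+1)/\gcd(\ell+1,e)$ with $e\in\{1,2,3,4,6\}$. Since the maximal cyclic order in $A_4,S_4,A_5$ is $5$, this only forces $\ell\leq 31$ -- a bound \emph{independent} of $[F_0:\Q]$, because unramifiedness makes the local degree contribute nothing. That threshold is coarser than Etropolski's $15[F_0:\Q]+1$ precisely when $[F_0:\Q]\leq 2$; for $F_0=\Q$ it leaves $\ell\in\{17,19,23,29,31\}$ unaddressed (for instance $\ell=29$ with $e=6$ gives projective inertia of order $(\ell+1)/6=5$, which does sit inside $A_5$). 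So your inertia argument establishes the Proposition with a weaker, degree-independent threshold but does not establish it with the stated bound $15[F_0:\Q]+1$; closing this gap requires either directly citing \cite[Proposition 2.6]{Etr} as the paper does, or carrying out a finer analysis beyond ``cyclic order $\leq 5$.'' A second, smaller imprecision: $\ell\nmid\#G$ does not rule out (potentially) multiplicative reduction at $\Cl$ -- it only forces the unipotent part of the inertia image to vanish there; Theorem \ref{TheoremImageOfInertia} explicitly treats that case and gives the same conclusion as good ordinary reduction, so the argument goes through, but the phrasing ``forbids multiplicative reduction'' is not correct.
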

\begin{proof}
Let $G:=\rho_{E,\ell}(G_{F_0})$.
By Theorem \ref{DicksonSerreClassification}, either $G$ contains $\SL_2(\Z/\ell\Z)$, is contained in $B(\ell)$, $N_s(\ell)$ or $N_{ns}(\ell)$ up to conjugacy, or has a projective image $\overline{G}:=G/\F_\ell^\times\subseteq \textrm{PGL}_2(\Z/\ell\Z)$ isomorphic to either $A_4, S_4$ or $A_5$. 

Since $\ell$ is unramified in $F_0$, it follows that the mod-$\ell$ cyclotomic character $\chi_\ell\colon G_{F_0}\rightarrow \F_\ell^\times$ is surjective. Since both $\chi_\ell(G_{F_0})=\det(G)$ and $\SL_2(\Z/\ell\Z):=\ker(\det\colon\GL_2(\Z/\ell\Z)\rightarrow \F_\ell^\times)$, it follows that $G$ contains $\SL_2(\Z/\ell\Z)$ iff $G=\GL_2(\Z/\ell\Z)$.

Henceforth, let us assume that $G\neq \GL_2(\Z/\ell\Z)$.
If $\overline{G}$ is isomorphic to either $A_4$, $S_4$ or $A_5$, then Etropolski has shown that $\ell\leq 15[F_0:\Q]+1$ \cite[Proposition 2.6]{Etr}.
The case where $G$ is contained in $B(\ell)$ up to conjugacy is equivalent to $E$ having an $F_0$-rational $\ell$-isogeny, which cannot happen since $\ell\not\in S_{F_0}$ \cite[Theorem 1]{LV14}.
We thus conclude that $G$ is contained in the normalizer of a Cartan subgroup up to conjugacy.
\end{proof}
\subsection{The shape of the inertia subgroup}
Let $F_0$ be a number field and $E_{/F_0}$ an elliptic curve. Following \cite{Ser72},
depending on the reduction type of $E$ at a prime $\Cl\subseteq F_0$ lying above $\ell\geq 5$, we will see that the image $\rho_{E,\ell}(I_\Cl)$ of the inertia group $I_{\Cl}\subseteq G_{F_0}$ contains a uniformly large subgroup.

Let us regard $E$ as lying over the local field $K:=(F_0)_\Cl$, the $\Cl$-adic completion of $F_0$ at $\Cl$. Then there exists a finite extension $L/K$ for which $E$ has semistable reduction \cite[\S VII.5]{Sil09}. Assume $L$ is such an extension of minimal degree. If $E_{/L}$ has bad multiplicative reduction, then $[L:K]\leq 2$ by the theory of Tate curves \cite[\S C.14]{Sil09}. If $E_{/L}$ has good reduction, then the ramification index $e(L/K)\in \lbrace 1,2,3,4,6\rbrace$ -- a more precise version of this is given in e.g. \cite[Theorem 6.1]{LR18}. 

The following result is essentially due to Serre \cite{Ser72}, with a slight generalization to account for elliptic curves defined over a finite extension of $\Q_\ell$. Fix a prime $\ell\in\Z^+$ and an algebraic closure $\oQl$ of $\Q_\ell$. Given a local field $K/\Q_\ell$, we let $K^{\textrm{nr}}$ denote the maximal unramified extension of $K$, and $I_K:=\Gal(\oQl/K^{\textrm{nr}})$ the inertia group of $K$. We also let $K_t/K^{\textrm{nr}}$ denote the maximal tamely ramified extension of $K$, and $I_{K,\ell}:=\Gal(\oQl/K_t)$ the wild inertia group; $I_{K,\ell}$ is a pro-$\ell$-group. Additionally, the tame inertia group is defined as $I_{K,t}:=\Gal(K_t/K^{\textrm{nr}})\cong I_K/I_{K,\ell}$, and is a pro-cyclic group. 

The following theorem describes the image of inertia based on the reduction type of $E_{/L}$, with a few extra conditions to simplify the result.
\begin{theorem}\label{TheoremImageOfInertia}
Let $\ell\geq 5$ be a prime and $K/\Q_\ell$ a finite unramified extension. Let $E_{/K}$ be an elliptic curve, and $L/K$ a minimal extension such that $E_{/L}$ is semistable. Then the absolute ramification index $e:=e(L/\Q_\ell)=e(L/K)\in\lbrace 1,2,3,4,6\rbrace$. Assuming that $\ell\nmid \#\rho_{E,\ell}(I_L)$, one also has the following subgroup of $\rho_{E,\ell}(I_L)$, based on the reduction type of $E_{/L}$:
\begin{itemize}
\item Good ordinary reduction: then $\rho_{E,\ell}(I_L)$ contains $\cD^{e}$ up to conjugacy.
\item Good supersingular reduction: then $\rho_{E,\ell}(I_L)= C_{ns}(\ell)^{e}$ up to conjugacy.
\item Bad multiplicative reduction: same as good ordinary.
\end{itemize}
\end{theorem}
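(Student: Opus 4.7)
The plan is to follow Serre's analysis in \cite{Ser72}, with minor adjustments allowing the unramified local field $K$ to play the role of $\Q_\ell$. The identity $e(L/\Q_\ell) = e(L/K)$ is immediate from $K/\Q_\ell$ being unramified, and the range $e \in \{1,2,3,4,6\}$ was already recorded in the paragraph preceding the theorem (Tate curves forcing $[L:K] \leq 2$ in the multiplicative case; N\'{e}ron model classification, e.g.\ \cite[Theorem 6.1]{LR18}, in the good-reduction case). The key simplification is that the hypothesis $\ell \nmid \#\rho_{E,\ell}(I_L)$ forces $\rho_{E,\ell}|_{I_L}$ to factor through the tame quotient $I_{L,t}$, which is pro-cyclic; hence $\rho_{E,\ell}(I_L)$ is cyclic of order prime to $\ell$, and in particular diagonalizable over $\F_{\ell^2}$. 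I would then split into the three reduction cases.

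For the good ordinary and bad multiplicative cases, the connected-\'{e}tale sequence of the $\ell$-torsion of the N\'{e}ron model over $\oo_L$ (respectively, Tate uniformization $E \simeq L^\times / q^{\Z}$ with $q \in L^\times$ and $\ell \mid v_L(q)$, the divisibility being forced by the hypothesis $\ell \nmid \#\rho_{E,\ell}(I_L)$ to kill the off-diagonal Tate cocycle) produces an $I_L$-stable line on which $I_L$ acts by $\chi_\ell$, with trivial action on the quotient. Adapting a basis realizes $\rho_{E,\ell}(I_L)$ inside the Borel subgroup with diagonal entries $(\chi_\ell, 1)$; cyclicity of the image combined with its order being coprime to $\ell$ allows diagonalization, so up to conjugacy $\rho_{E,\ell}(I_L) = \{\operatorname{diag}(\chi_\ell(\sigma), 1) : \sigma \in I_L\}$. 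It remains to compute $\chi_\ell(I_L) \subseteq \F_\ell^\times$: on tame inertia $\chi_\ell$ equals the fundamental character of level $1$ of $\Q_\ell$, and under the inclusion $I_{L,t} \hookrightarrow I_{\Q_\ell,t}$ it restricts to the $e$-th power of the fundamental character of level $1$ of $L$. Hence $\chi_\ell(I_L) = (\F_\ell^\times)^e$, giving $\rho_{E,\ell}(I_L) \supseteq \cD^e$ up to conjugacy.

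For the good supersingular case, Raynaud's classification of finite flat group schemes of type $(\ell,\ell)$, combined with Serre \cite[Proposition 11, \S1.11]{Ser72}, shows that $E[\ell]$ carries a canonical $\F_{\ell^2}$-structure on which $I_L$ acts via the pair $(\theta_L^{(2)}, (\theta_L^{(2)})^\ell)$ of level-$2$ fundamental characters of $L$. Under the isomorphism $\F_{\ell^2}^\times \cong C_{ns}(\ell)$, this identifies $\rho_{E,\ell}(I_L)$ with the image of $\theta_L^{(2)}$, and the same ramification analysis as in the previous paragraph pins this image down to $(\F_{\ell^2}^\times)^e = C_{ns}(\ell)^e$ up to conjugacy.

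The main obstacle I expect is the careful bookkeeping of exponents in the fundamental-character formulas: establishing $\chi_\ell|_{I_L} = (\theta_L^{(1)})^e$ (and its level-$2$ analogue) requires selecting compatible uniformizers of $L$ and of $\Q_\ell$ and tracking how tame roots of uniformizers transform across a ramified extension. These are entirely standard manipulations drawn from \S1 of \cite{Ser72}, and they transfer to our setup with $K$ in place of $\Q_\ell$ without any substantive change, since the unramifiedness of $K/\Q_\ell$ makes $K$ invisible to the tame inertia calculation.
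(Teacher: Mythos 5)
Your proposal follows essentially the same route as the paper's proof: use the hypothesis $\ell\nmid\#\rho_{E,\ell}(I_L)$ to force the action to factor through tame inertia, then invoke Serre's description of the tame action in terms of fundamental characters of levels $1$ and $2$ and pin down the exponent $e$ from the ramification of $L/\Q_\ell$. The only cosmetic differences are that you diagonalize the cyclic tame image directly to get an equality in the ordinary and multiplicative cases where the paper passes through the semisimplification and cites \cite[Lemma 4]{Gen22} to extract the subgroup $\cD^e$, and your remark that $\ell\mid v_L(q)$ ``kills the Tate cocycle'' is superfluous (the $I_L$-stable line and the diagonalizability are already supplied by the connected--\'etale sequence and the prime-to-$\ell$, cyclic image).
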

\begin{remark}
Assume that GRH is true, and let $E_{/F_0}$ be an elliptic curve over a number field $F_0$ with no rationally defined CM. If $\ell$ is sufficiently large, then by Proposition \ref{ImageLiesInNormalizerOfCartan} we must have $\ell\nmid \#\rho_{E,\ell}(G_{F_0})$ when $\rho_{E,\ell}(G_{F_0})$ isn't surjective. In such a case, the conditions $e((F_0)_\Cl/\Q_\ell)=1$ and $\ell\nmid \#\rho_{E,\ell}(I_L)$ of Theorem \ref{TheoremImageOfInertia} are automatically satisfied for any prime $\Cl\subseteq F_0$ above $\ell$ and appropriate choice of extension $L/(F_0)_\Cl$.
\end{remark}
\begin{remark}
The theorem above is a very mild generalization of \cite[Theorem 3.1]{LR13} in the case $\ell\nmid \#\rho_{E,\ell}(I_L)$ where our elliptic curve is defined over $(F_0)_\Cl$ instead of $\Q_\ell$. In fact, \cite[Theorem 3.1]{LR13} is a generalization of several results of \cite{Ser72}, where Lozano-Robledo does not assume that $e(L/\Q_\ell)=1$ -- this is to allow his elliptic curves $E_{/\Q}$ to have additive reduction. Since Theorem \ref{THEOREMBASECHANGEOVERF} is a uniformity result independent of reduction type, we must also allow $E_{/F_0}$ to have additive reduction.
\end{remark}
\begin{proof}[Proof of Theorem \ref{TheoremImageOfInertia}]

If $E_{/L}$ has bad reduction, then $e(L/K)\mid 2$ follows from the theory of Tate curves. If $E_{/L}$ has good reduction, then one has $e(L/K)\in \lbrace 1,2,3,4,6\rbrace$ by e.g. \cite[Theorem 6.1]{LR18}. We are thus left to determine the image of inertia. Since $I_{L,\ell}$ is a pro-$\ell$-group and $\ell\nmid \#\rho_{E,\ell}(I_L)$, it follows that the action factors through tame inertia $I_{L,t}$; In particular, the results from \cite{Ser72} for the image $\rho_{E,\ell}(I_{L,t})$ will also apply to $\rho_{E,\ell}(I_L)$.

Suppose that $E_{/L}$ has good ordinary or bad multiplicative reduction. Then by \cite[Propositions 11 and 13]{Ser72}, $I_{L,t}$ acts on the semisimplification of $E[\ell]$ via the trivial character and $\theta^{e}_{\ell-1}$, where $\theta_{\ell-1}\colon I_{L,t}\rightarrow \F_\ell^\times$ is a surjective character.\footnote{For $\ell\nmid d$ and a uniformizer $x$ of $L^{\textrm{nr}}$, one has a character $\theta_d$ via the natural isomorphism $\theta_d\colon \Gal(L^{\textrm{nr}}(\sqrt[d]{x})/L^{\textrm{nr}})\xrightarrow{\sim}\mu_{d}\subseteq \overline{L}$, where $\mu_d$ is the group of $d$'th roots of unity. Such characters parametrize the continuous $\overline{\F_\ell}$-valued characters of $I_{L,t}$ \cite[Proposition 5]{Ser72}.} The kernel 
of the reduction map $E[\ell]\rightarrow \widetilde{E}[\ell]$ is an $L$-rational $\ell$-subgroup, and so one can identify $\rho_{E,\ell}(G_L)\subseteq B(\ell)$. With this identification, one has $\rho_{E,\ell}(I_L)\subseteq\left\lbrace \begin{bmatrix}*&*\\0&1\end{bmatrix}\right\rbrace$. Thus, its semisimplification im$\left(\begin{bmatrix}
\theta_{\ell-1}^{e}&0\\
0&1
\end{bmatrix}\right)=\cD^{e}$ is a subgroup of $\rho_{E,\ell}(I_L)$ by e.g. \cite[Lemma 4]{Gen22}.

Suppose then that $E_{/L}$ has good supersingular reduction. Following the proof of \cite[Theorem 3.1]{LR13}, since $\ell\nmid \#\rho_{E,\ell}(I_L)$ one has that
$I_{L,t}$ acts on $E[\ell]$ via a character $\theta_{\ell^2-1}^{e}$, where $\theta_{\ell^2-1}\colon I_{L,t}\rightarrow \F_{\ell^2}^\times$ is surjective \cite[Proposition 10]{Ser72}. Therefore, $\rho_{E,\ell}(I_L)$ is a cyclic subgroup of $\F_{\ell^2}^\times$ with index $\gcd(\ell^2-1,e)$, and so is isomorphic to the $e$'th power of $C_{ns}(\ell)$.
\end{proof}
Next, we will mildly generalize part of \cite[Theorem 3.2]{LR13}. For a diagonal matrix $\gamma:=\begin{bmatrix}
a&0\\0&d
\end{bmatrix}\in C_s(\ell)$, we let $\gamma_f$ denote its ``flip" $\begin{bmatrix}
d&0\\0&a
\end{bmatrix}$. For a subgroup $H\subseteq C_s(\ell)$, we let $H_f:=\lbrace \gamma_f:\gamma\in H\rbrace$.
\begin{proposition}\label{SubgroupsInRepresentationInsideNormalizerCartan}
With assumptions as in Theorem \ref{TheoremImageOfInertia}, assume further that $\ell\geq 17$ with $\ell\neq 23$. Let us set $H:=\rho_{E,\ell}(I_L)$.
\begin{itemize}
\item If $H\subseteq N_s(\ell)$, then $\cD^{e}\subseteq H$ or $\cD^{e}_f\subseteq H$. 
\item If $H\subseteq N_{ns}(\ell)$, then $H=C_{ns}(\ell)^{e}$.
\end{itemize}
\end{proposition}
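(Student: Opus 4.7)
The plan is to refine the ``up to conjugacy'' qualifier in Theorem~\ref{TheoremImageOfInertia}, exploiting that $H := \rho_{E,\ell}(I_L)$ already sits inside the specific copy of $N_s(\ell)$ or $N_{ns}(\ell)$ fixed by our basis. I treat the two bullets separately.

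For the split case, the good-ordinary and bad-multiplicative clauses of Theorem~\ref{TheoremImageOfInertia} give some $g \in \GL_2(\F_\ell)$ with $g\cD^{e}g^{-1} \subseteq H \subseteq N_s(\ell)$. Under $\ell \geq 17$ the subgroup $(\F_\ell^\times)^e$ of $e$-th powers has order $(\ell-1)/\gcd(\ell-1,e) \geq 3$ for every $e \in \{1,2,3,4,6\}$, so $\cD^{e}$ contains a matrix $\mathrm{diag}(b,1)$ with $b \neq \pm 1$. Its conjugate $g\,\mathrm{diag}(b,1)\,g^{-1}$ lies in $N_s(\ell)$ with the same eigenvalues $\{b,1\}$; but any anti-diagonal matrix in $N_s(\ell)$ has shape $\left[\begin{smallmatrix}0&d\\a&0\end{smallmatrix}\right]$ with eigenvalues $\pm\sqrt{ad}$, a negated pair, which rules out $\{b,1\}$ since $b \neq \pm 1$. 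Hence the conjugate is diagonal, forcing $\{g\langle e_1\rangle,\, g\langle e_2\rangle\} = \{\langle e_1\rangle, \langle e_2\rangle\}$, i.e.\ $g \in N_s(\ell)$. A direct matrix calculation now yields $g\cD^{e}g^{-1} = \cD^{e}$ when $g \in C_s(\ell)$ (since $C_s(\ell)$ is abelian), and $g\cD^{e}g^{-1} = \cD^{e}_f$ otherwise (since conjugation by the swap matrix interchanges the two diagonal entries).

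For the non-split case, the supersingular clause of Theorem~\ref{TheoremImageOfInertia} gives $H = g C_{ns}(\ell)^{e} g^{-1}$ for some $g$. Identifying $C_{ns}(\ell) \cong \F_{\ell^2}^\times$, the scalars $Z(\ell) \cong \F_\ell^\times$ form a subgroup of index $\ell+1 > e$, so $C_{ns}(\ell)^{e}$ (hence $H$) contains a non-scalar element $\gamma$. The minimal polynomial of $\gamma$ is an irreducible quadratic over $\F_\ell$, so the centralizer $C_\gamma$ of $\gamma$ in $\GL_2(\F_\ell)$ is a non-split Cartan, and the abelian group $H$ lies in $C_\gamma$. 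I then show $C_\gamma = C_{ns}(\ell)$: otherwise $C_\gamma \cap C_{ns}(\ell) = Z(\ell)$ (two distinct non-split Cartans meet only in the center), and any element of $N_{ns}(\ell) \setminus C_{ns}(\ell)$ has scalar square, so $C_\gamma \cap N_{ns}(\ell)$ is a subgroup of the cyclic $C_\gamma$ of exponent --- and hence order --- dividing $2(\ell-1)$. But $|H| = (\ell^2-1)/\gcd(\ell^2-1, e) \geq (\ell^2-1)/6 > 2(\ell-1)$ for $\ell \geq 17$, contradicting $H \subseteq C_\gamma \cap N_{ns}(\ell)$. So $C_\gamma = C_{ns}(\ell)$, and $H$ is the unique subgroup of the cyclic $C_{ns}(\ell)$ of its own order, namely $C_{ns}(\ell)^{e}$.

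The main obstacle is securing enough room for these two uniqueness arguments simultaneously across the full range $e \in \{1,2,3,4,6\}$; the numerical hypotheses $\ell \geq 17$ and $\ell \neq 23$ are calibrated precisely so that $|\cD^{e}| \geq 3$ (for the split case) and $|C_{ns}(\ell)^{e}| > 2(\ell-1)$ (for the non-split case) both hold uniformly.
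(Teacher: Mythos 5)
Your proposal has a genuine gap in the case split: in each bullet you invoke only one clause of Theorem~\ref{TheoremImageOfInertia} — the good-ordinary/bad-multiplicative clause when $H\subseteq N_s(\ell)$, and the good-supersingular clause when $H\subseteq N_{ns}(\ell)$ — but the hypothesis on where $H$ lives says nothing a priori about the reduction type of $E_{/L}$. You need to rule out the other reduction type in each bullet, and as written you don't. In the split bullet, if $E_{/L}$ were supersingular then Theorem~\ref{TheoremImageOfInertia} would make $H$ cyclic of order $(\ell^2-1)/\gcd(\ell^2-1,e)\geq(\ell^2-1)/6>2(\ell-1)$, whereas every cyclic subgroup of $N_s(\ell)$ has order dividing $2(\ell-1)$ (the square of any element of $N_s(\ell)\ssm C_s(\ell)$ is scalar); this is the same numerical bound you use at the end of your non-split argument, so the tool is already on your table. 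In the non-split bullet, if $E_{/L}$ were ordinary or multiplicative then a conjugate of $\cD^e$ would lie in $N_{ns}(\ell)$, and you can kill that with the very eigenvalue argument you use in your split bullet: $\cD^e$ has an element with eigenvalues $\{1,b\}$, $b\neq\pm 1$, but every element of $N_{ns}(\ell)$ has eigenvalues that are either a Galois-conjugate pair in $\F_{\ell^2}\ssm\F_\ell$, equal scalars, or a negated pair. So you have all the ingredients — the two exclusion arguments just need to be crossed over and stated. The paper does exactly this (order bound to exclude supersingular in the split case, eigenvalue argument to exclude ordinary/multiplicative in the non-split case) before proceeding.

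Once that dichotomy is settled, your two arguments are correct and track the paper's closely. The one substantive variation is in the non-split bullet, where you deduce $H\subseteq C_{ns}(\ell)$ by showing the centralizer of a non-scalar $\gamma\in H$ is a non-split Cartan that must equal $C_{ns}(\ell)$; the paper reaches the same containment more directly by noting that a generator of $H$ has order $>2(\ell-1)$ and therefore cannot lie in $N_{ns}(\ell)\ssm C_{ns}(\ell)$. Both routes work. One small correction to your closing remark: the two inequalities you cite ($|\cD^e|\geq 3$ and $|C_{ns}(\ell)^e|>2(\ell-1)$) both already hold at $\ell=23$ (where $|\cD^e|\geq 11$ and $(\ell^2-1)/6=88>44$), so the hypothesis $\ell\neq 23$ is not explained by your argument; it is carried over from the setting of \cite[Theorem 3.2]{LR13} rather than calibrated by these bounds.
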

\begin{proof}
First, let us assume that $H\subseteq N_s(\ell)$. Then by Theorem \ref{TheoremImageOfInertia}, $\cD^e$ is contained in $H$ up to conjugacy -- otherwise we'd have $C_{ns}(\ell)^{e}=H$ up to conjugacy, which is impossible since $\ell+1\nmid 48$. Thus, for some $M=\begin{bmatrix}
a&b\\
c&d
\end{bmatrix}\in \GL_2(\Z/\ell\Z)$ we have $M\cD^e M^{-1}\subseteq H\subseteq N_s(\ell)$. Since $\cD^e$ is cyclic and $\#\cD^e>2$, it must have an element of order at least $3$, and so one can show through calculations that either $a=d=0$ or $b=c=0$, whence we have $M\cD^e M^{-1}=\cD^e\subseteq H$ or $M\cD^e M^{-1}=\cD_f^e\subseteq H$.

Next, let us assume that $H\subseteq N_{ns}(\ell)$. We claim that $\cD^e$ is not contained in $N_{ns}(\ell)$ up to conjugacy. To see this, we note that elements of $\cD^e$ have eigenvalues $a$ and $1$ where $a\in \F_\ell^\times$, whereas elements of $N_{ns}(\ell)$ have Galois-conjugate eigenvalues of the form $a\pm b\sqrt{\epsilon}$ or $\pm \sqrt{a^2-b^2\epsilon}$ where $(a,b)\in \F_\ell^2\ssm \lbrace (0,0)\rbrace$. As noted in the previous paragraph, $\cD^e$ has an element of order at least 3, which then must have eigenvalues $1$ and $a\in \F_\ell^\times\ssm \lbrace \pm 1\rbrace$; in particular, such an element does not have an eigenvalue pair of the form $a\pm b\sqrt{\epsilon}$ or $\pm \sqrt{a^2-b^2\epsilon}$. We thus deduce by Theorem \ref{TheoremImageOfInertia} that $H$ equals $C_{ns}(\ell)^e$ \textit{up to conjugacy.}

We are left to show that $H=C_{ns}(\ell)^e$, not just up to conjugacy. We claim that $H\subseteq C_{ns}(\ell)$; observe that if this were true, then both $H$ and $C_{ns}(\ell)^e$ are subgroups of the cyclic group $C_{ns}(\ell)$ with equal sizes, whence they must be equal. Both by cyclicity of $H$ and $\# H=\# C_{ns}(\ell)^e$, there exists an element in $H$ of order at least $(\ell^2-1)/6$. Since elements of $N_{ns}(\ell)\ssm C_{ns}(\ell)$ have order dividing $2(\ell-1)$, we deduce that $H$ must be generated by an element of $C_{ns}(\ell)$, whence we conclude that $H=C_{ns}(\ell)^e$.
\end{proof}
Before we prove Theorem \ref{THEOREMSECONDMAIN}, let us record the following simple yet useful fact about non-diagonal subgroups of $N_s(\ell)$.
\begin{lemma}\label{LemmaFlip}
Let $G$ be a subgroup of $N_s(\ell)$ that is not contained in $C_s(\ell)$. Then for all $\gamma\in G\cap C_s(\ell)$, one has $\gamma_f\in G$.
\end{lemma}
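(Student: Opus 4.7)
The plan is to exhibit an explicit element $\sigma\in G\setminus C_s(\ell)$ that conjugates each diagonal matrix to its flip. Such a $\sigma$ exists by the hypothesis $G\not\subseteq C_s(\ell)$, and since $N_s(\ell)=C_s(\ell)\cup wC_s(\ell)$ with $w=\bigl[\begin{smallmatrix}0&1\\1&0\end{smallmatrix}\bigr]$, any such $\sigma$ has the anti-diagonal form $\sigma=\bigl[\begin{smallmatrix}0&d\\a&0\end{smallmatrix}\bigr]$ for some $a,d\in\F_\ell^\times$.

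Next, I would carry out the single matrix computation that underlies the lemma. For $\gamma=\bigl[\begin{smallmatrix}x&0\\0&y\end{smallmatrix}\bigr]\in G\cap C_s(\ell)$, one has $\sigma^{-1}=\bigl[\begin{smallmatrix}0&1/a\\1/d&0\end{smallmatrix}\bigr]$, and a direct multiplication gives
\[
\sigma\gamma\sigma^{-1}=\begin{bmatrix}0&d\\a&0\end{bmatrix}\begin{bmatrix}x&0\\0&y\end{bmatrix}\begin{bmatrix}0&1/a\\1/d&0\end{bmatrix}=\begin{bmatrix}y&0\\0&x\end{bmatrix}=\gamma_f.
\]
Since $G$ is closed under conjugation and both $\sigma$ and $\gamma$ lie in $G$, we conclude $\gamma_f=\sigma\gamma\sigma^{-1}\in G$, as desired.

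There is essentially no obstacle here; the only thing to be careful about is verifying that the anti-diagonal form of $\sigma$ is forced by $\sigma\in N_s(\ell)\setminus C_s(\ell)$ (which is immediate from the explicit description of $N_s(\ell)$ given earlier in the paper), and that the conjugation identity indeed produces the flip $\gamma_f$ rather than some scaled version of it. The computation above is coordinate-free enough that the scalars $a,d$ cancel completely, so the result holds for every choice of $\sigma\in G\setminus C_s(\ell)$.
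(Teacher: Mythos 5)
Your proof is correct and is essentially identical to the paper's: both take an anti-diagonal $M\in G\smallsetminus C_s(\ell)$ (which exists by hypothesis) and verify by direct computation that $M\gamma M^{-1}=\gamma_f$, whence $\gamma_f\in G$ by closure under conjugation. The only difference is that you spell out the matrix multiplication explicitly, whereas the paper leaves it as ``one checks.''
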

\begin{proof}
For any matrix $M\in N_s(\ell)\ssm C_s(\ell)$, one checks that $M\begin{bmatrix}
a&0\\
0&d
\end{bmatrix}M^{-1}=\begin{bmatrix}
d&0\\
0&a
\end{bmatrix}$.
\end{proof}
\section{The proofs of Theorems \ref{THEOREMBASECHANGEOVERF} and \ref{THEOREMSECONDMAIN}}
We will first prove Theorem \ref{THEOREMSECONDMAIN}. After doing so, we will also prove Proposition \ref{AFFinite} in the following subsection, thereby proving Theorem \ref{THEOREMBASECHANGEOVERF}. 
\subsection{The proof of Theorem \ref{THEOREMSECONDMAIN}}
\begin{proof}[Proof of Theorem \ref{THEOREMSECONDMAIN}]
Fix a prime $\ell\geq \max\lbrace 29, 15[F_0:\Q]+2\rbrace$ unramified in $F_0$ where $\ell\not\in S_{F_0}$.
Fix an elliptic curve $E_{/F_0}$, and assume that $\rho_{E,\ell}(G_{F_0})$ is not equal to $\GL_2(\Z/\ell\Z)$. Then by Proposition \ref{ImageLiesInNormalizerOfCartan}, we may fix a basis $\lbrace P,Q\rbrace$ of $E[\ell]$ such that $G:=\rho_{E,\ell,P,Q}(G_{F_0})$ is contained in $N_s(\ell)$ or $N_{ns}(\ell)$. Since $\ell\geq 29$, by Proposition \ref{SubgroupsInRepresentationInsideNormalizerCartan} we also know that $G$ contains $\cD^e, \cD^e_f$ or $C_{ns}(\ell)^e$ for some $e\in \lbrace 1,2,3,4,6\rbrace$. 

Fix a prime $\Cl\subseteq F_0$ over $\ell$, and set $K:=(F_0)_\Cl$.
Taking an extension $L/K$ of minimal degree for which $E_{/L}$ is semistable, we know by Theorem \ref{TheoremImageOfInertia}
that $e:=e(L/K)=e(L/\Q_\ell)\in \lbrace 1,2,3,4,6\rbrace$. 

Suppose first that $G\subseteq N_s(\ell)$; assume that $\ell\neq 37,73$. Since $\ell\not\in S_{F_0}$, we must have $G\not\subseteq C_s(\ell)$. 
By Proposition \ref{SubgroupsInRepresentationInsideNormalizerCartan}
we have $\cD^e\subseteq \rho_{E,\ell}(I_L)$, without loss of generality. We claim that $\rho_{E,\ell}(I_K)\subseteq C_s(\ell)$: for the sake of contradiction, suppose this isn't true. Then since $\rho_{E,\ell}(I_K)$ is cyclic,
it must be generated by an element $M$ in $N_s(\ell)\ssm C_s(\ell)$; such an element has order dividing $2(\ell-1)$. By Lemma \ref{LemmaFlip}, we have both containments $\cD^e\subseteq \rho_{E,\ell}(I_K)$ and $\cD^e_f\subseteq \rho_{E,\ell}(I_K)$. Since $\cD^e$ and $\cD^e_f$ commute with each other, it follows that their product $\cD^e\cD^e_f=C_s(\ell)^e$ is a subgroup of $\rho_{E,\ell}(I_K)$. We thus have that $\#C_s(\ell)^e=(\ell-1)^2/\gcd(\ell-1,e)^2$ divides $\#\rho_{E,\ell}(I_K)$, which is impossible since $\#\rho_{E,\ell}(I_K)\mid 2(\ell-1)$. 
We deduce that $\rho_{E,\ell}(I_K)\subseteq C_s(\ell)$.

The following work will construct a subgroup of $G$ with size $2(\ell-1)^2/\gcd(\ell-1,e)$, which will prove the main index result for the case $G\subseteq N_s(\ell)$. First, we note that $\det\rho_{E,\ell}(I_K)=\chi_\ell(I_K)$, which surjects onto $\F_\ell^\times$ since $K/\Q_\ell$ is unramified. 
Additionally, for any matrix $\gamma\in C_s(\ell)$ one has $\gamma\gamma_f=\det(\gamma)I$. Therefore, since $\rho_{E,\ell}(I_K)\subseteq G\cap C_s(\ell)$ and $G\not\subseteq C_s(\ell)$, Lemma \ref{LemmaFlip} implies that $Z(\ell)\subseteq G$.

Since $\cD^e$ is a subgroup of $G$, it follows that $\cD^e\cD^e_f=C_s(\ell)^e$ is also a subgroup, and has size $(\ell-1)^2/\gcd(\ell-1,e)^2$. In fact, since $C_s(\ell)^e\cap Z(\ell)=Z(\ell)^e$ with size $(\ell-1)/\gcd(\ell-1,e)$, we find that $Z(\ell)C_s(\ell)^e$ is a subgroup of $G$ with size 
\[
\#Z(\ell)C_s(\ell)^e=\#Z(\ell)\cdot \#C_s(\ell)^e\cdot \frac{1}{\#C_s(\ell)^e\cap Z(\ell)}=\frac{(\ell-1)^2}{\gcd(\ell-1,e)}.
\]
Next, we fix an element $N\in G\ssm C_s(\ell)$; writing $N=\begin{bmatrix}
0&b\\
c&0
\end{bmatrix}$, one has $N^2=bcI=-\det(N)I\in Z(\ell)$. As shown in the proof of Lemma \ref{LemmaFlip}, one has for all $\gamma\in C_s(\ell)$ that $N\gamma N^{-1}=\gamma_f$, whence we have $N(Z(\ell)C_s(\ell)^e)N^{-1}=Z(\ell)C_s(\ell)^e$. In particular, $\langle N\rangle Z(\ell)C_s(\ell)^e$ is a subgroup of $G$, whose size is $2|-\det(N)|\cdot \frac{(\ell-1)^2}{\gcd(\ell-1,e)}\cdot \frac{1}{\#\langle N\rangle \cap Z(\ell)C_s(\ell)^e}$. One checks that $\langle N\rangle \cap Z(\ell)C_s(\ell)^e=\langle -\det(N)I\rangle$, from which we deduce that
\[
\#\langle N\rangle Z(\ell)C_s(\ell)^e=\frac{2(\ell-1)^2}{\gcd(\ell-1,e)}.
\] 
From the containments $\langle N\rangle Z(\ell)C_s(\ell)^e\subseteq G\subseteq N_s(\ell)$, we compare indices and conclude that
\[
[N_s(\ell):G]\mid \gcd(\ell-1,e).
\]

Suppose next that $G\subseteq N_{ns}(\ell)$.
This argument will follow a proof of Le Fourn and Lemos \cite[Proposition 1.4]{LFL21}, which itself follows a preprint of Zywina \cite[Proposition 1.13]{Zyw}.
By Proposition \ref{SubgroupsInRepresentationInsideNormalizerCartan} we have $C_{ns}(\ell)^e= \rho_{E,\ell}(I_L)$; the final paragraph of that proof can also be used to show that $\rho_{E,\ell}(I_K)\subseteq C_{ns}(\ell)$.

Since $\rho_{E,\ell}(I_L)$ has index $\gcd(\ell^2-1,e)$ in $C_{ns}(\ell)$, the index of $\rho_{E,\ell}(I_K)$ in $C_{ns}(\ell)$ divides $\gcd(\ell^2-1,e)$. Since $K/\Q_\ell$ is unramified, it follows that $\det(\rho_{E,\ell}(I_K))=\F_\ell^\times$. This forces the index $[C_{ns}(\ell):\rho_{E,\ell}(I_K)]$ to be odd, since otherwise $\rho_{E,\ell}(I_K)$ is a subgroup of the square elements of $C_{ns}(\ell)$, which contradicts surjectivity of the determinant of $\rho_{E,\ell}(I_K)$. We conclude that $[C_{ns}(\ell):\rho_{E,\ell}(I_K)]\mid 3$. 

Let us set $C(G):=G\cap C_{ns}(\ell)$. By the containment $\rho_{E,\ell}(I_K)\subseteq C(G)$, one has $[C_{ns}(\ell):C(G)]\mid 3$; in particular, $C(G)=C_{ns}(\ell)$ or $C_{ns}(\ell)^3$.  Since $\det\colon  \rho_{E,\ell}(I_K)\rightarrow \F_\ell^\times$ is surjective and $\rho_{E,\ell}(I_K)$ is contained in $C(G)$, we have that $\det C(G)=\F_\ell^\times$.

We break our analysis into two cases:
\begin{enumerate}[1.]
\item $\ell\equiv 1\pmod 3$. It follows that $C(G)=C_{ns}(\ell)$, since otherwise $\det C_{ns}(\ell)^3=\F_\ell^\times$, which is impossible since $\# (\F_\ell^\times)^3<\ell-1$. In particular, we have $C_{ns}(\ell)\subseteq G$, and so $G=C_{ns}(\ell)$ or $N_{ns}(\ell)$.
\item $\ell\equiv 2\pmod 3$. If $C(G)=C_{ns}(\ell)$, then $G=C_{ns}(\ell)$ or $N_{ns}(\ell)$. Suppose then that $C(G)=C_{ns}(\ell)^3$.
One checks that $\overline{N_{ns}(\ell)}:=N_{ns}(\ell)/C_{ns}(\ell)^3$ is isomorphic to the dihedral group $D_3$ of order 6, and the quotient group $\overline{G}:=G/C_{ns}(\ell)^3$ has index $I:=[\overline{N_{ns}(\ell)}:\overline{G}]\mid 6$; note that $I=[N_{ns}(\ell):G]$. Keeping in mind that $C(G)=G\cap C_{ns}(\ell)=C_{ns}(\ell)^3$, we have several cases for the index:
\begin{enumerate}[a.]
\item $I=1$: then $G=N_{ns}(\ell)$, which is impossible since $G\cap C_{ns}(\ell)=C_{ns}(\ell)^3$.
\item $I=2$: then $[N_{ns}(\ell):G]=2$. 
Thus we have 
\[
6=[N_{ns}(\ell):C_{ns}(\ell)^3]=2[G:G\cap C_{ns}(\ell)],
\]
and so $[G:G\cap C_{ns}(\ell)]=3$. This is impossible since $[G:G\cap C_{ns}(\ell)]\leq [N_{ns}(\ell):C_{ns}(\ell)]=2$.
\item $I=3$: then the even order element $\gamma:=\begin{bmatrix}
1&0\\
0&-1
\end{bmatrix}$ must lie in $G$ (recall that $N_{ns}(\ell)=\langle \gamma, C_{ns}(\ell)\rangle$). Therefore, since the subgroup $G(\ell):=\langle \gamma,C_{ns}(\ell)^3\rangle$ of $G$ has the property that $[N_{ns}(\ell):G(\ell)]=[N_{ns}(\ell):G]$, we deduce that $G=G(\ell)$.
\item $I=6$: then $[N_{ns}(\ell):C_{ns}(\ell)^3]=[N_{ns}(\ell):G]$, and so $G=C_{ns}(\ell)^3$.
\end{enumerate}
We conclude that when $\ell\equiv 2\pmod 3$ one has that $G$ equals either $N_{ns}(\ell)$, $C_{ns}(\ell)$, $G(\ell)$ or $C_{ns}(\ell)^3$, which are of index $1,2,3$ and $6$ in $N_{ns}(\ell)$, respectively.
\end{enumerate}
Finally, we conclude our proof by noting that if $F_0$ has a real embedding, then it must have an element of trace $0$ and determinant $-1$, from which we get $G\not\subseteq C_{ns}(\ell)$.
\end{proof}
\subsection{The proof of Proposition \ref{AFFinite}}
Recall that a set of primes $A_{F_0}$ is $F_0$-admissible if for any elliptic curve $E_{/F_0}$, any prime $\ell\in\Z^+$ and any point $R\in E[\ell]$, if $R\not\in E(F_0)$ then there exists a prime $p\in A_{F_0}$ for which 
\[
p\mid \#\oo_{\rho_{E,\ell}(G_{F_0})}(R)=[F_0(R):F_0].
\]
Assume that GRH is true, and that $F_0$ has no rationally defined CM. We will now prove Proposition \ref{AFFinite}, which is that one can pick a finite choice of $A_{F_0}$.
\begin{proof}[Proof of Proposition \ref{AFFinite}]
Fix an elliptic curve $E_{/F_0}$; below, for each prime $\ell\in\Z^+$ we will write $G:=\rho_{E,\ell}(G_{F_0})$. First, we observe that we may exclude any finite amount of primes $\ell$ from our analysis. This is because for any point $R\in E[\ell]^\bullet$, the size of its orbit $\oo_{G}(R)$ equals the index of the stabilizer of $R$ in $G$, and this index divides $\#\GL_2(\Z/\ell\Z)=\ell(\ell-1)^2(\ell+1)$. To this end, let us start by adding to $A_{F_0}:=\emptyset$ the set of prime divisors of $\#\GL_2(\Z/\ell\Z)$, where we range over $\ell$ ramified in $F_0$, $\ell< \max\lbrace 29, 15[F_0:\Q]+2\rbrace$, $\ell=37,73$ and $\ell\in S_{F_0}$. 

Fix a prime $\ell\not\in A_{F_0}$. We will show that every order $\ell$  point of $E$ has even degree over $F_0$, which will show that $A_{F_0}$ is $F_0$-admissible.
If $G=\GL_2(\Z/\ell\Z)$, then $G$ acts transitively on $E[\ell]^\bullet$ and thus the degree of any point in $E[\ell]^\bullet$ over $F_0$ is $\ell^2-1$, which is a multiple of 8.
To this end, let us assume that $G\neq \GL_2(\Z/\ell\Z)$; then Proposition \ref{ImageLiesInNormalizerOfCartan} implies that $G$ lies in the normalizer of a Cartan subgroup up to conjugacy. Let us fix a basis $\lbrace P,Q\rbrace$ of $E[\ell]$ for which $G:=\rho_{E,\ell,P,Q}(G_{F_0})$ is contained in $N_s(\ell)$ or $N_{ns}(\ell)$.

\textbf{Suppose $G$ is contained in $N_s(\ell)$.}
Since $\ell\not\in S_{F_0}$, we must have $ G\subseteq N_s(\ell)\ssm C_s(\ell)$.
Fix a point $R\in E[\ell]^\bullet$ which is not $F_0$-rational, and set $H:=\rho_{E,\ell,P,Q}(G_{F_0(R)})$. Then by \eqref{EqIndexOfGalSubrep} we have
\[
[G:H]=[F_0(R):F_0].
\] 
Since $H$ acts trivially on the subgroup $\langle R\rangle$, \cite[Lemma 6.6]{LR13} applies and we land in one of the following 3 cases:
\begin{enumerate}[i.]
\item $H$ is contained in $\cD$;
\item $H$ is contained in $\cD_f$;
\item $H$ equals $ \left\lbrace I, \begin{bmatrix}0&b\\ b^{-1}&0\end{bmatrix}\right\rbrace$ for some $b\in \F_\ell^\times$.
\end{enumerate}
By Theorem \ref{THEOREMSECONDMAIN} we have that $Z(\ell)\subseteq G$. Thus, since $Z(\ell)\cap H=1$ in all three cases, we find that $Z(\ell)H$ is a subgroup of $G$ of size $(\ell-1)\#H$, and so $\ell-1\mid [G:H]$, i.e., $\ell-1\mid [F_0(R):F_0]$, whence we find that $[F_0(R):F_0]$ is even.

\textbf{Suppose $G$ is contained in $N_{ns}(\ell)$.}
Then combining Equation \eqref{EQIndexDivNormCartanTorsion} with Theorem \ref{THEOREMSECONDMAIN}, we have for all points $R\in E[\ell]^\bullet$ that 
\[
\ell^2-1\mid 6\cdot [F_0(R):F_0].
\]
Since $\ell^2\equiv 1\pmod 8$, we deduce that $4\mid [F_0(R):F_0]$.
We thus conclude that our finite set $A_{F_0}$ is $F_0$-admissible.
\end{proof}
\section{The Case Where CM is Rationally Defined}\label{SectionCMCase}
We conclude this paper by proving Theorem \ref{TheoremBaseChangeFailsForRationallyDefinedCM}, which says that the conclusion of Theorem \ref{THEOREMBASECHANGEOVERF} fails when we allow $F_0$ to have rationally defined CM. In particular, we will show in this situation that no $F_0$-admissible set $A_{F_0}$ is finite.
To construct the appropriate counterexamples, we will use a mild generalization of Dirichlet's theorem on primes in arithmetic progressions.
\begin{lemma}[Primes in multiple arithmetic progressions]\label{DirichletTheoremMulti}
Let $M_1,\ldots, M_r$ be pairwise coprime positive integers. For each $1\leq i\leq r$, fix an integer $a_i$ coprime to $M_i$. Then there are infinitely many primes $\ell\in\Z^+$ such that for each $1\leq i\leq r$ one has $\ell\equiv a_i\pmod {M_i}$.
\end{lemma}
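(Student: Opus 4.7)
The plan is to reduce the statement to a single arithmetic progression and then invoke Dirichlet's theorem in its classical form. Since the $M_i$ are pairwise coprime, their product $M:=M_1 M_2 \cdots M_r$ equals their least common multiple, and the Chinese Remainder Theorem gives a ring isomorphism $\Z/M\Z\cong \prod_{i=1}^{r}\Z/M_i\Z$.

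The first step is to use this isomorphism to produce an integer $a\in \Z$ satisfying $a\equiv a_i \pmod{M_i}$ simultaneously for each $1\leq i\leq r$. Next, I would verify that $\gcd(a,M)=1$: for each $i$, the congruence $a\equiv a_i\pmod{M_i}$ together with $\gcd(a_i,M_i)=1$ forces $\gcd(a,M_i)=1$, and since the $M_i$ are pairwise coprime this gives $\gcd(a,M)=\prod_i \gcd(a,M_i)=1$.

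With $a$ and $M$ coprime, Dirichlet's theorem on primes in arithmetic progressions supplies infinitely many primes $\ell\in\Z^+$ with $\ell\equiv a\pmod M$. For any such prime, reducing modulo $M_i$ (which divides $M$) gives $\ell\equiv a\equiv a_i\pmod{M_i}$ for every $i$, as required. There is no real obstacle here; the only thing to be careful about is the coprimality check used to justify the application of Dirichlet, which is precisely why the hypothesis $\gcd(a_i,M_i)=1$ is imposed on each residue.
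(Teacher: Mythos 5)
Your proof is correct and follows essentially the same route as the paper: both use the Chinese Remainder Theorem to combine the congruences into a single residue class $a \pmod{M_1\cdots M_r}$, verify coprimality of $a$ with the modulus, and then invoke Dirichlet's theorem for that single progression. No meaningful differences.
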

\begin{proof}
By the Chinese Remainder Theorem, there exists an integer $x\in \Z$ such that for each $1\leq i\leq r$ we have
\[
x=a_i+M_ik_i
\]
for some $k_i\in \Z$. Since each $\gcd(a_i,M_i)=1$, it follows that $x$ is coprime to the product $M_1M_2\cdots M_r$.

Dirichlet's theorem on primes in arithmetic progressions tells us there are infinitely many primes in the congruency class $\lbrace x+M_1M_2\cdots M_rk:k\in\Z\rbrace$. Let $\ell\in\Z^+$ be any such prime; then we can write 
\[
\ell=x+M_1M_2\cdots M_rk
\]
for some $k\in\Z$.
Thus for each $1\leq i\leq r$ one has
\[
\ell=a_i+M_i(k_i+M_1M_2\cdots M_{i-1}M_{i+1}\cdots M_rk)
\]
from which it follows that $\ell\equiv a_i\pmod {M_i}$.
\end{proof}
As we will see, our proof of Theorem \ref{TheoremBaseChangeFailsForRationallyDefinedCM} requires the existence of an arbitrarily large prime which both splits in at least one imaginary quadratic order in $F_0$ and satisfies certain coprimality conditions.
\begin{lemma}\label{SplittingOandCoprimality}
Let $K$ be an imaginary quadratic field and $\oo\subseteq K$ an order. Then for any integer $B>0$ there are infinitely many primes $\ell$ which split in $\oo$ and, depending on the fundamental discriminant $\Delta_K$, will satisfy one of the following:
\begin{enumerate}[1.]
\item If $\Delta_K=-3$, then $\frac{\ell-1}{6}$ is coprime to $B$.
\item If $\Delta_K=-4$, then $\frac{\ell-1}{4}$ is coprime to $B$.
\item If $\Delta_K<-4$, then $\frac{\ell-1}{2}$ is coprime to $B$.
\end{enumerate} 
\end{lemma}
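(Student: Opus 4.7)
The plan is to reduce the lemma to an application of Dirichlet's theorem on primes in arithmetic progressions by packaging the splitting hypothesis and the coprimality hypothesis into a single congruence condition on $\ell$ modulo an appropriately chosen integer $N$. Write $f$ for the conductor of $\oo$ in $\oo_K$. For primes $\ell\nmid f$, the prime $\ell$ splits in $\oo$ iff it splits in $\oo_K$, iff $\left(\frac{\Delta_K}{\ell}\right)=1$, which cuts out a nonempty union of residue classes in $(\Z/|\Delta_K|\Z)^\times$ (indeed the kernel of the associated Kronecker character, an index-two subgroup of the unit group). In case (1) this reduces to $\ell\equiv 1\pmod 3$, in case (2) to $\ell\equiv 1\pmod 4$, and in case (3) to a determined union of classes modulo $|\Delta_K|$. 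In all three cases the splitting condition forces $c\mid \ell-1$, so $(\ell-1)/c$ is an integer.

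I next translate ``$(\ell-1)/c$ coprime to $B$" into a system of $p$-adic avoidance conditions, one for each prime $p\mid B$. Setting $v:=v_p(c)$, I require $\ell\not\equiv 1\pmod{p^{v+1}}$: this excludes a single residue from $(\Z/p^{v+1}\Z)^\times$, simplifying to $\ell\not\equiv 1\pmod p$ when $p\nmid c$ and to an avoidance modulo $p^{v+1}\le p^2$ when $p\in\{2,3\}$ divides $c$. Combining these over $p\mid B$ yields a nonempty subset of $(\Z/M_B\Z)^\times$, where $M_B:=\prod_{p\mid B} p^{v_p(c)+1}$.

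I then fuse the two systems into a single congruence modulo $N:=\mathrm{lcm}(|\Delta_K|,M_B)$. For the prime-power components that are coprime between $|\Delta_K|$ and $M_B$, the Chinese Remainder Theorem (equivalently, Lemma \ref{DirichletTheoremMulti}) combines the conditions with no loss. At primes dividing both $|\Delta_K|$ and $B$ I verify compatibility by direct inspection: in case (1) the only overlap is at $p=3$ and $\ell\equiv 4\pmod 9$ realizes both $\ell\equiv 1\pmod 3$ and $\ell\not\equiv 1\pmod 9$; in case (2) the only overlap is at $p=2$ and $\ell\equiv 5\pmod 8$ realizes both $\ell\equiv 1\pmod 4$ and $\ell\not\equiv 1\pmod 8$; in case (3) the splitting set is an index-two subgroup of $(\Z/|\Delta_K|\Z)^\times$, and its intersection with the mild avoidance conditions at shared primes remains nonempty (the only prime in $c$ is $2$, and $\ell\equiv 3\pmod 4$ is always compatible with the $2$-part of the splitting condition, e.g.\ for $\Delta_K=-8$ one takes $\ell\equiv 3\pmod 8$). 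Dirichlet's theorem then produces infinitely many primes in the combined residue class $a\pmod N$ with $\gcd(a,N)=1$, and I discard the finitely many dividing $f$ to obtain the desired family.

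The main obstacle is this last compatibility verification at primes dividing both $|\Delta_K|$ and $B$, and especially in case (3) where the splitting set is not a single congruence but a nontrivial kernel of a quadratic Dirichlet character. In every case, however, this is a finite check, and because the splitting set has density $1/2$ in $(\Z/|\Delta_K|\Z)^\times$ while the avoidances exclude only one residue per prime-power modulus, non-emptiness of the intersection follows without further input.
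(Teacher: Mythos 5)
Your overall strategy---encoding the splitting condition via the Kronecker character $\chi_{\Delta_K}$ mod $|\Delta_K|$, encoding the coprimality condition as the avoidances $\ell\not\equiv 1\pmod{p^{v_p(c)+1}}$, and then fusing these into a single congruence class via CRT and Dirichlet (equivalently Lemma \ref{DirichletTheoremMulti})---is the same in spirit as the paper's proof; the paper just carries out the congruence engineering directly with quadratic reciprocity instead of speaking of the Kronecker character abstractly. For cases (1) and (2) your explicit compatibility verifications ($\ell\equiv 4\pmod 9$, $\ell\equiv 5\pmod 8$) are correct and complete.

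The gap is in case (3). You handle the shared prime $p=2$ by example and then assert non-emptiness of the intersection for all remaining shared primes on the grounds that the splitting set has density $1/2$ while each avoidance removes one residue per prime-power modulus. This heuristic is not a proof and, as a numerical bound, genuinely fails: for $\Delta_K=-24$ and $B=6$, the kernel of $\chi_{-24}$ in $(\Z/24\Z)^\times$ has density $1/2$, the set of $a$ with $a\not\equiv 1\pmod 4$ and $a\not\equiv 1\pmod 3$ is $\lbrace 11,23\rbrace$ of density $1/4$, and $1/2+1/4-1<0$, so inclusion-exclusion gives nothing (the intersection is $\lbrace 11\rbrace$, but no density count shows this). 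The structural obstruction your sketch ignores: at an odd shared prime such as $p=3\mid\Delta_K$ with $3\mid B$, the avoidance $a\not\equiv 1\pmod 3$ wipes out the entire kernel $\lbrace 1\rbrace$ of the $3$-component $\chi_3$, \emph{forcing} $\chi_3(a)=-1$, and you then need another prime factor $q\mid\Delta_K$ at which you can choose $\chi_q(a_q)=-1$ to restore $\prod_q\chi_q(a_q)=1$ while respecting the $q$-avoidance. That such a $q$ always exists is true (since $|\Delta_K|>3$ is fundamental and $\chi_{\Delta_K}$ is primitive, so $\chi_{\Delta_K}$ is not a character mod $3$ alone), but it requires an argument rather than a density appeal. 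The paper's proof circumvents the whole compatibility question by choosing $\ell\equiv -1\pmod p$ at nearly every relevant odd prime $p$, so that one congruence simultaneously pins down the Legendre symbol $\left(\frac{\ell}{p}\right)=\left(\frac{-1}{p}\right)$ and ensures $\ell-1\equiv -2\not\equiv 0\pmod p$; the remaining sign is then fixed by one special congruence in each of the three subcases $e=1$, $e=0$ with $r\neq 0$, and $e=0$ with $r=0$. You should either adopt that device or supply the CRT/primitivity argument that guarantees compensation at another prime factor of $\Delta_K$.
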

\begin{proof}
Throughout this proof, we will assume that $B$ is squarefree and $B>2$.
Let us write $K=\Q(\sqrt{-d})$ where $d\in\Z^+$ is squarefree. Given an integer $n\in\Z^+$, let us also write $\textrm{odd}(n)$ for the odd part of $n$.

First, let us suppose that $d=1$, i.e., $\oo=\Z[fi]$ where $f:=[\Z[i]:\oo]$. Then by Lemma \ref{DirichletTheoremMulti} there are infinitely many primes $\ell\in\Z^+$ with $\ell\nmid f$ for which $\ell\equiv -3\pmod {8}$ and $\ell\equiv -1\pmod {\odd(B)}$. The first congruence shows that $\frac{\ell-1}{4}$ is odd, and since $\ell\equiv 1\pmod 4$ we have that $\ell$ splits in $\oo$. From the second congruence, we get that $\ell-1\equiv -2\not\equiv 0\pmod {\odd(B)}$, whence $\odd(B)$ is also coprime to $\frac{\ell-1}{4}$. We conclude that both the splitting and coprimality conditions hold.

Next we suppose that $d=3$, i.e., $\oo=\Z[f\zeta_3]$ where $\zeta_3$ is a primitive cube root of unity and $f:=[\Z[\zeta_3]:\oo]$. Let us write $\odd(B)=3^eB'$ where $\gcd(6,B')=1$ and $0\leq e\leq 1$. Then by Lemma \ref{DirichletTheoremMulti} there are infinitely many primes $\ell\in\Z^+$ with $\ell\nmid f$ for which $\ell\equiv -5\pmod {36}$ and $\ell\equiv -1\pmod { B'}$. One similarly checks that $\frac{\ell-1}{6}$ is coprime to $B$, and since $\ell\equiv 1\pmod 3$ it follows that $\ell$ splits in $\oo$. Thus both desired conditions on $\ell$ hold.

In what follows, we suppose that $d\neq 1, 3$; we will check several cases for $d$. Recall that for any prime $\ell\in\Z^+$, $\ell$ splits in $\oo$ iff both $\ell\nmid f$ and the Legendre symbol
\[
\left(\frac{-d}{\ell}\right)=1.
\]
Let us write
\[
d= 2^{e}\cdot \prod_{i=1}^rp_i\cdot \prod_{j=1}^s q_j,
\]
where $0\leq e\leq 1$ and the primes $p_i\equiv 1\pmod 4$ and $q_j\equiv 3\pmod 4$ (allowing $r=0$ or $s=0$). Then given a prime $\ell\in\Z^+$, the Legendre symbol factorizes as
\[
\left(\frac{-d}{\ell}\right)=\left(\frac{-1}{\ell}\right)\cdot \left(\frac{2}{\ell}\right)^e\cdot \prod_{i=1}^r\left(\frac{p_i}{\ell}\right)\cdot\prod_{j=1}^s\left(\frac{q_j}{\ell}\right).
\]
In particular, if $\ell\equiv -1\pmod 4$ then quadratic reciprocity implies
\begin{equation}\label{EqnLegendreSymbol}
\left(\frac{-d}{\ell}\right)=(-1)^{s+1}\cdot\left(\frac{2}{\ell}\right)^e\cdot \prod_{i=1}^r\left(\frac{\ell}{p_i}\right)\cdot\prod_{j=1}^s\left(\frac{\ell}{q_j}\right).
\end{equation}
The rest of our proof is divided into three cases.
\begin{enumerate}[1.]
\item $e=1$: by Lemma \ref{DirichletTheoremMulti}, there are infinitely many primes $\ell\in\Z^+$ with $\ell\nmid f$ such that $\ell\equiv 3\pmod 8$ and $\ell\equiv -1\pmod {\odd(\lcm(d,B))}$. The former condition implies $\left(\frac{2}{\ell}\right)^e=-1$, and the latter condition implies that each $\left(\frac{\ell}{p_i}\right)=1$ and $\left(\frac{\ell}{q_j}\right)=-1$, whence we have by \eqref{EqnLegendreSymbol} that $\left(\frac{-d}{\ell}\right)=1$. Furthermore, from $\ell\equiv -1\pmod {\odd(B)}$ and $\ell\equiv -1\pmod 4$, we find that $B$ is coprime to $\frac{\ell-1}{2}$. 
\item $e=0$ and $r\neq 0$: fix $1\leq k\leq r$ with $p_k\equiv 1 \pmod 4$, and fix a nonsquare $\alpha$ mod $p_k$. Then by Lemma 15, there are infinitely many primes $\ell\in\Z^+$ with $\ell\nmid f$ such that $\ell\equiv \alpha\pmod {p_k}$ and $\ell\equiv -1\pmod {4\cdot \frac{\odd(\lcm(d,B))}{p_k}}$. We check by \eqref{EqnLegendreSymbol} that $\left(\frac{-d}{\ell}\right)=1$, and that $B$ is coprime to $\frac{\ell-1}{2}$. 
\item $e=0$ and $r=0$: then there exists $1\leq k\leq s$ with $q_k\neq 3$, since otherwise we have $d=3$. Fix such a $k$, and fix a square $\alpha\not\equiv 1\pmod{q_k}$. Then by Lemma \ref{DirichletTheoremMulti}, there are infinitely many primes $\ell\in\Z^+$ with $\ell\nmid f$ such that $\ell\equiv \alpha\pmod {q_k}$ and $\ell\equiv  -1\pmod{4\cdot \frac{\odd(\lcm(d,B))}{q_k}}$. Thus by \eqref{EqnLegendreSymbol} we have that both $\ell$ splits in $\oo$ and $B$ is coprime to $\frac{\ell-1}{2}$. \qedhere
\end{enumerate}
\end{proof}
\begin{proof}[Proof of Theorem \ref{TheoremBaseChangeFailsForRationallyDefinedCM}]
We will first prove the case where $F_0:=K_\oo$ is a ring class field of an order $\oo$ from an imaginary quadratic field $K$.

Fix an integer $B\in\Z^+$. First, let us assume that the fundamental discriminant $\Delta_K\neq -3,-4$. Then by Lemma \ref{SplittingOandCoprimality}, there exists a prime $\ell\geq 7$ for which both $\ell$ splits in $\oo$ and $\frac{\ell-1}{2}$ is coprime to $B$.
By \cite[Corollary 1.8]{BC20}, there exists an $\oo$-CM elliptic curve $E_{/K_\oo}$ whose mod-$\ell$ Galois representation image $\rho_{E,\ell}(G_{K_\oo})$ equals $C_s(\ell)$ with respect to some basis $\lbrace P,Q\rbrace$ of $E[\ell]$. It follows that the $G_{K_\oo}$-stable subgroup $\langle P\rangle$ induces a $K_\oo$-rational $\ell$-isogeny of $E$, and since $\rho_{E,\ell}(G_{K_\oo})=C_s(\ell)$ the degree of $P$ over $K_\oo$ is
\[
[K_\oo(P):K_\oo]=\ell-1.
\]
Let $r\colon G_{K_\oo}\rightarrow \F_\ell^\times$ denote the isogeny character of $\langle P\rangle$, i.e., the action of $G_{K_\oo}$ on $\langle P\rangle$. Then we have
\[
\# r(G_{K_\oo})=[K_\oo(P):K_\oo]=\ell-1.
\]

Let us define a new character $\chi\colon G_{K_\oo}\rightarrow \F_\ell^\times$ via
\[
\chi(\sigma):=r(\sigma)^{\frac{\ell-1}{2}}.
\]
Thus $\chi(\sigma)=1$ if $r(\sigma)$ is a square mod $\ell$, and is $-1$ otherwise. Let $E'$ be the quadratic twist of $E$ by $\chi$; then $E'$ is also defined over $K_\oo$ and has CM by $\oo$. Furthermore, $P$ induces a point $P'\in E'[\ell]^\bullet$ that generates a $K_\oo$-rational $\ell$-isogeny, and whose isogeny character is $r':=\chi
\cdot r$.

We claim that the order $\#r'(G_{K_\oo})=\frac{\ell-1}{2}$. To see this, we first note that for each $\sigma\in G_{K_\oo}$, if the image $r(\sigma)$ has even order, then it must be a non-square modulo $\ell$; this follows from $\ell\equiv 3\pmod 4$. It follows that $r'(\sigma)=-r(\sigma)$ has odd order equal to $\frac{\ell-1}{\gcd(\ell-1, \frac{\ell-1}{2}+k)}$, where we've written $\langle g\rangle=\F_\ell^\times$ and $r(\sigma)=g^k$. So taking a generator $g:=r(\sigma)$ of $\F_\ell^\times$, it follows that $r'(\sigma)$ has order $\frac{\ell-1}{2}$, whence we deduce that $r'(G_{K_\oo})$ is an index two subgroup of $\F_\ell^\times$. 

By the above work, we conclude that
\[
[K_\oo(P'):K_\oo]=\frac{\ell-1}{2},
\]
whence the degree of $P'$ over $K_\oo$ is coprime to $B$. Since $\frac{\ell-1}{2}>1$, we find that $P'$ is not $K_\oo$-rational.
We conclude that
\[
E'(L)[\ell]\supsetneq E'(K_\oo)[\ell]
\]
where $L:=K_\oo(P')$. In particular, for a set $A_{K_\oo}$ of primes to be $K_\oo$-admissible, for any integer $B\in\Z^+$ it must contain a prime coprime to $B$, which forces $\#A_{K_\oo}=\infty$.

Next, we suppose that $\Delta_K=-4$, and so $\oo\subseteq\Z[i]$. By Lemma \ref{SplittingOandCoprimality}, we can choose a prime $\ell\geq 13$ which splits in $\oo$ -- by its proof, let us choose $\ell\equiv -3\pmod {8}$ -- and for which $\frac{\ell-1}{4}$ is coprime to $B$. Again by \cite[Corollary 1.8]{BC20} there exists an $\oo$-CM elliptic curve $E_{/K_\oo}$ for which
\begin{equation}\label{CMGalRepFullSplit}
\rho_{E,\ell}(G_{K_\oo})=C_s(\ell)
\end{equation}
with respect to some basis $\lbrace P,Q\rbrace$ of $E[\ell]$. 

By \cite[Proposition 2.2]{BP16} there exists a $\Z[i]$-CM elliptic curve $E''_{/K_\oo}$ and a $K_\oo$-rational cyclic $f$-isogeny $E\rightarrow E''$, where $f:=[\Z[i]:\oo]$. Since $\ell\nmid f$, this restricts to a $\F_\ell[G_{K_\oo}]$-module isomorphism $E[\ell]\xrightarrow{\sim}E''[\ell]$, whence we have isomorphic mod-$\ell$ Galois representations, $\rho_{E,\ell}(G_{K_\oo})\cong \rho_{E'',\ell}(G_{K_\oo})$.

So without loss of generality, let us assume that $\oo=\Z[i]$. Then \eqref{CMGalRepFullSplit} tells us there is a $K_\oo$-rational $\ell$-isogeny $\langle P\rangle\subseteq E[\ell]$ for which $[K_\oo(P):K_\oo]=\ell-1$; write its isogeny character as $r\colon G_{K_\oo}\rightarrow \F_\ell^\times$. Let us define the quartic character $\chi:=r^{\frac{\ell-1}{4}}\colon G_{K_\oo}\rightarrow \F_\ell^\times$. Then one obtains a quartic twist $E'_{/K_\oo}$ of $E$ via $\chi$. Corresponding to $P\in E$, one has an $\ell$-torsion point $P'\in E'$ whose isogeny character is $\chi \cdot r$. Since $\ell\equiv -3\pmod {8}$, we find that the image $(\chi\cdot r)(G_{K_\oo})$ is an index 4 subgroup of $\F_\ell^\times$, whence we have 
\[
[K_\oo(P'):K_\oo]=\frac{\ell-1}{4}.
\]
In particular, over the extension $K_\oo(P')$ of $K_\oo$, the elliptic curve $E'$ obtains a new torsion point $P'$ whose nontrivial degree $[K_\oo(P'):K_\oo]=\frac{\ell-1}{4}$ is coprime to $B$. We conclude that $\#A_{K_\oo}=\infty$ for any $K_\oo$-admissible set $A_{K_\oo}$.

A similar analysis shows that when $K=\Q(\sqrt{-3})$ there exists a prime $\ell\geq 31$ with $\ell\equiv -5\pmod {36}$, for which a sextic twist $E'$ of a $\Z[\zeta_3]$-CM elliptic curve $E_{/K_\oo}$ with $\rho_{E,\ell}(G_{K_\oo})=C_s(\ell)$ will have an $\ell$-torsion point $P'$ of degree $\frac{\ell-1}{6}>1$ over $K_\oo$, and for which $\frac{\ell-1}{6}$ is coprime to $B$. This concludes the proof in the case where the base field $F_0$ is a ring class field $K_\oo$ of an imaginary quadratic order.

Finally, let us assume that $F_0$ contains a ring class field $K_\oo$ of an imaginary quadratic field. Then our work above shows that for $B\in\Z^+$, there exists a prime $\ell\gg_{K_\oo, B} 0$, an $\oo$-CM elliptic curve $E_{/K_\oo}$ and a torsion point $P\in E[\ell]$ such that $E(K_\oo(P))[\ell]\neq E(K_\oo)[\ell]$ and the degree $[K_\oo(P):K_\oo]$ is coprime to $B$. Since $\ell$ is allowed to be arbitrarily large with respect to $F_0$, we may assume that $[F_0(P):F_0]>1$; for example, by Merel's Theorem we can take $\ell>\max\lbrace 7,[F_0:\Q]^{3[F_0:\Q]^2}\rbrace$ \cite[Th\'{e}or\`{e}me]{Mer96}. 
Furthermore, since $K_\oo(P)/K_\oo$ is Galois we find that $[F_0(P):F_0]\mid [K_\oo(P):K_\oo]$,
and so it follows that the degree of $P$ over $F_0$ is also coprime to $B$. We deduce that $E(F_0(P))[\ell]\neq E(F_0)[\ell]$, and so we conclude that any $F_0$-admissible set $A_{F_0}$ must have $\#A_{F_0}=\infty$.
\end{proof}
\subsection{Acknowledgments}
The author thanks the referee for their comments and suggestions.

\end{document}